\documentclass{amsart}

\usepackage{amsmath}
\usepackage{amssymb}
\usepackage{amsthm}
\usepackage{stmaryrd}
\usepackage[hidelinks, colorlinks=true, linkcolor=blue, citecolor=blue]{hyperref}
\usepackage{graphicx}
\usepackage[dvipsnames]{xcolor}
\usepackage{enumitem}
\usepackage{mathtools}

\theoremstyle{plain}
\newtheorem{theorem}{Theorem}
\newtheorem{corollary}[theorem]{Corollary}
\newtheorem{lemma}[theorem]{Lemma}
\newtheorem{proposition}[theorem]{Proposition}

\theoremstyle{remark}
\newtheorem{remark}[theorem]{Remark}

\numberwithin{equation}{section}
\numberwithin{theorem}{section}

\newcommand{\R}{\mathbb{R}}
\newcommand{\N}{\mathbb{N}}
\newcommand{\Domain}{\Omega}
\newcommand{\trial}{\mathbb{U}}
\newcommand{\test}{\mathbb{V}}
\newcommand{\data}{\mathbb{V}^*}

\newcommand{\fsin}{f_{k}^{\mathtt{s}}}
\newcommand{\fcos}{f_{k}^{\mathtt{c}}}
\newcommand{\fexp}{f_{k}^{\mathtt{e}}}
\newcommand{\ftsin}{\widetilde f_{k}^{\mathtt{s}}}
\newcommand{\ftcos}{\widetilde f_{k}^{\mathtt{c}}}
\newcommand{\sinc}{\operatorname{sinc}}

\newcommand{\dt}{\mathrm{d}t}

\begin{document}
	
	\title[Resonant solutions and (in)stability of the linear wave equation]{Resonant solutions and (in)stability\\ of the linear wave equation}
	
	\author[G. Sangalli]{Giancarlo Sangalli}
	\address{Università degli Studi di Pavia, Dipartimento di Matematica,  27100 Pavia, Italy \newline
    \hspace*{1.5em}Istituto di Matematica Applicata e Tecnologie Informatiche, ``E. Magenes" -- CNR, 27100 Pavia, Italy.}
	\email{giancarlo.sangalli@unipv.it}
	
	\author[D.~Terazzi]{Davide Terazzi}
	\address{Università di Milano Bicocca, Dipartimento di Matematica e Applicazioni, 20125 Milano, Italy}
	\email{d.terazzi@campus.unimib.it}
	
	\author[P.~Zanotti]{Pietro Zanotti}
	\address{Universit\`{a} degli Studi di Milano, Dipartimento di Matematica, 20131, Milano, Italy}
	\email{pietro.zanotti@unimi.it}
	
	\keywords{Wave equations, Inf-sup stability, Resonance}
	
	\subjclass[2020]{35L05, 35L20, 35B34, 35B35}
	
	%
	%
	%
	
	\begin{abstract}
	We revise the analysis of the acoustic wave equation, addressing the question whether the classical well-posedness implies the existence of an isomorphism between prescribed solution and data spaces. This question is of interest for the design and the analysis of discretization methods. Expanding on existing results, we point out that established choices of solution and data space in terms of classical Bochner spaces must be expected to be incompatible with the existence of such an isomorphism, because of resonant waves. We formulate this observation in the language of the so-called inf-sup theory, with the help of an eigenfunction expansion, which reduces the original partial differential equation to a system of ordinary differential equations. We further verify that an isomorphism can be established, for each equation in the system, upon equipping the data space with a suitable resonance-aware norm. In the appendix, we extend our results to other time-dependent linear PDEs.
	\end{abstract} 
	
	\maketitle
	
\section{Introduction}
\label{S:introduction}

Wave equations model a number of oscillatory phenomena of interest in physics and engineering, including acoustics, electromagnetism, and structural vibrations. The construction and the analysis of approximation methods for such equations have been the subject of intensive studies over the years but still pose substantial challenges. One main difficulty originates in the analysis of the equations and critically distinguishes them from elliptic and parabolic ones. 

To introduce the subject of our interest, consider the following abstract linear equation: for $f \in \mathbb{D}\subseteq\data$, find $u \in \trial$ such that
\begin{equation}
\label{eq:abstract-problem}
\mathcal{L} u = f \quad \text{in $\data$},
\end{equation}
where $\trial$ and $\test$ are Hilbert spaces and $\mathcal{L}: \trial \to \data$ is a bounded linear operator. The  problem is  well-posed in the classical (Hadamard) sense when the solution depends continuously on the right hand side, that is 
\begin{equation}
\label{eq:hadamard-well-posedness}
 \| u \|_{\trial} \lesssim \|f\|_{\mathbb{D}},
\end{equation} 
for a given norm $\|\cdot\|_{\mathbb{D}}$, which in general may be stronger than  $\|\cdot\|_{\data}$. As the continuity of $\mathcal{L}$ implies the lower bound $\|f\|_{\data} \lesssim \| u \|_{\trial}$, then $\mathcal{L}$ establishes an isomorphism in the particular case when $\mathbb D = \data$ , that is
\begin{equation}
\label{eq:isomorphism}
  \| u \|_{\trial} \eqsim \|f\|_{\data}.
\end{equation} 
This equivalence plays a pivotal role in the design and analysis of approximation methods. Establishing a counterpart of \eqref{eq:isomorphism} for a discretization of \eqref{eq:abstract-problem} lays the groundwork for sharp stability and a priori error estimates \cite{Babuska:70, Brezzi:74}. Other relevant applications are found in the a posteriori error analysis \cite{Verfurth:13}, the development of preconditioning techniques \cite{Hiptmair:06, Mardal.Winther:11}, the theory of optimal control \cite{Loscher.Steinbach} and the analysis of adaptive techniques \cite{Feischl:22}.

It is nowadays well-established that,  for a variety of elliptic, parabolic and saddle point problems, using Sobolev- and Bochner-type spaces,  well-posedness in the stronger sense of the equivalence \eqref{eq:isomorphism} holds, and can be characterized by means of the  Banach-Ne\v{c}as-Babu\v{s}ka theorem, see \cite{Ern.Guermond:21-II, Ern.Guermond:21-III} for an overview (see also \cite{Kreuzer.Zanotti:24a} for mixed elliptic-parabolic systems). In contrast, the application of the same technique to the acoustic wave equation is notoriously critical. In particular, a remarkable counter-example by Zank \cite{Zank:20} shows that the equivalence \eqref{eq:isomorphism} fails for a typical definition of $\trial$ and $\test$ in terms of Bochner-like spaces. 

As a first contribution, we elucidate the mechanism underlying the counter-example by Zank. We consider a simple initial-boundary value problem (IBVP) for the acoustic wave equation. We propose a standard definition of $\trial$ and $\test$ and we observe that resonant waves prevent from establishing the desired equivalence. Indeed, those waves verify that, for the bilinear form $\mathcal{B}(\cdot, \cdot) := \left\langle \mathcal{L}\cdot, \cdot\right\rangle  $ induced by $\mathcal{L}$, the so-called inf-sup condition from the Banach-Ne\v{c}as-Babu\v{s}ka theorem fails, i.e.
\begin{equation}
\label{eq:inf-sup-failure}
		\inf_{\substack{\widetilde u \in \trial}}\,\,\sup _{\substack{v \in \test}} \,\frac{\mathcal{B}(\widetilde u,v)}{\|\widetilde u\|_\trial\|v\|_\test} = 0. 
\end{equation}
Our proof builds upon an eigenfunction expansion, which reduces the original IBVP to an infinite system of initial value problems (IVPs). Although our result hinges on a specific functional setting, the underlying principle is fairly general, suggesting that the same issue arises for different definitions of $\trial$ and $\test$ in terms of Bochner-like spaces as well as for other equations modeling oscillatory phenomena.  

Devising a convenient functional setting for the analysis of wave equations is a long-standing problem. On the one hand, various results address the well-posedness of wave equations by providing bounds of the type \eqref{eq:hadamard-well-posedness} (with $\|\cdot\|_{\mathbb{D}}$  stronger than  $\|\cdot\|_{\data}$) including the classical references \cite{Dautray.Lions:92, Evans:10, Ladyzhenskaya:85, Lions.Magenes:72} as well as more recent contributions \cite{Dong.Gergoulis.Mascotto.Wang:25, Dong.Mascotto.Wang:25, Ferrari.Perugia:25, Ferrari.Perugia.Zampa:25}.
On the other hand, to our best knowledge, approaches that aim at establishing the stronger equivalence \eqref{eq:isomorphism} have been pursued along two main lines:
\begin{enumerate}[label=(\roman*)]
	\item Impose that the equation establishes an isomorphism, by augmenting the norm on one of the spaces with a term involving the operator $\mathcal{L}$. For instance, given the norm $\|\cdot\|_{\test}$, equip $\trial$ with $\|\mathcal{L}\cdot \|_{\data}$, see \cite{Bignardi.Moiola:25, Fuhrer.Gonzalez.Karkulik:25, Steinbach.Zank:22, Zank:20} and \cite{Henning.Palitta.Simoncini.Urban:22} for the same argument, upon exchanging the roles of $\trial$ and $\test$.
	\vspace{5pt}
	\item Modify the equation by acting on the solution with a suitable operator, and show that the transformed problem establishes an isomorphism \cite{Bignardi.Moiola:25, Hoonhout.Loscher.UrzuaTorres:25}.
\end{enumerate} 
In (i), at least one of the spaces is characterized `implicitly' via the operator $\mathcal{L}$ and not only in terms of classical regularity. In (ii), both the regularity requirements and the invertibility of the transformation must be carefully addressed.

As a second contribution,  we aim at an explicit (i.e. independent of $\mathcal{L}$) characterization of a combination of spaces and norms realizing the equivalence \eqref{eq:isomorphism}. We consider a single IVP resulting from the eigenfunction expansion of the above-mentioned IBVP. In this simplified scenario, we propose a standard definition of $\|\cdot\|_\trial$ and derive two alternative but equivalent explicit expressions for $\|\cdot\|_{\data}$. In particular, we distinguish between a spectral approach via Fourier expansion, and a temporal approach based on a suitable energy balance law. The resulting norm is resonance-aware, i.e., it correctly accounts for data that excite a resonance.  Our findings suggest that it might be necessary moving beyond traditional Bochner-type spaces.

\subsection*{Outline}
Section \ref{S:model-problem-statement} introduces the model IBVP, the setting for our analysis and the eigenfunction expansion. Section \ref{S:Zank-counter-example} elucidates the role played by resonant waves in establishing \eqref{eq:inf-sup-failure}. Section \ref{S:resonance-aware-norms} introduces a new resonance-aware setting for the analysis. Finally, Appendix \ref{A:extension-other-IBVPs} provides an extension of our results to other initial-boundary value problems.

\subsection*{Notation} We use standard notation and norms for Lebesgue and Sobolev spaces $L^p(D)$, $H^s_0(D)$ and $H^s(D)$ with $p \in [1,\infty]$, $s\in\N$ and $D \subseteq \R^d$, $d\in \N$. We write $H^{-s}(D)$ for the dual space of $H^{s}_0(D)$. For a Banach space $\mathbb X$, we denote by $H^s(\mathbb X)$, $s\in\N$, and $L^\infty(\mathbb X)$ the spaces of $H^s$ and $L^\infty$ functions,  mapping the time interval $[0, T]$ into $\mathbb X$. We equip those spaces with the Bochner norms $\|\cdot\|_{H^s(\mathbb X)}$ and $\|\cdot\|_{L^\infty(\mathbb X)}$, respectively. To alleviate the notation, we abbreviate, e.g., $H^s(L^2(D))$ to $H^s(L^2)$, whenever there is no ambiguity on $D$. For $x, y \in \R$, we write $x \eqsim y$ or equivalently $x\lesssim y \lesssim x$ when there are constants $0 < C_1 \leq C_2$ such that $C_1 x \leq y \leq C_2 x$. The dependence of the hidden constants on relevant quantities is addressed case by case. Finally, we denote by $\sinc(x)$ the unnormalized sinc function
\begin{equation*}
\sinc(x) := \frac{\sin(x)}{x}
\quad \text{for $x\neq 0$} \qquad \text{and} \qquad \sinc(x) := 1 \quad \text{for $x=0$}.
\end{equation*}

\section{Model problem statement}
\label{S:model-problem-statement}
This section introduces a model IBVP and a framework for its analysis. We consider the simplest possible model, in order to better illustrate the essence of our results. We comment on more general boundary conditions, initial conditions and other model problems in Remark~\ref{rem:BoundaryConditions} and \ref{rem:nonzero-initial-values} and Appendix \ref{A:extension-other-IBVPs}, respectively. 

Let $\Domain\subset\R^d$, $d\in\N$, be an open, bounded and connected set, whose boundary can be locally represented as the graph of a Lipschitz-continuous function. Denote by $T>0$ the time horizon. We consider the following IBVP for the acoustic wave equation:
\begin{equation}
	\label{eq:model-problem}
		\begin{aligned}
			 \frac{1}{c^2}\partial_{tt}u - \Delta u & = f && \text{in } \phantom{\partial } \Omega \times (0,T) \\
			u & = 0 && \text{in } \partial \Omega \times (0,T) \\
			u(\cdot, 0) & = 0 && \text{in } \Omega \\
			\partial_t u(\cdot, 0) & = 0 && \text{in } \Omega,
		\end{aligned}
\end{equation}
where $f: \Omega \times (0,T) \to \R$ denotes the source term and $c > 0$ is the wave speed.

\subsection{Functional setting}
\label{SS:functional-setting}
We aim at recasting the model problem into the abstract form \eqref{eq:abstract-problem}, i.e. we are interested in a weak formulation of the IBVP \eqref{eq:model-problem}. 
Motivated by classical well-posedness bounds \cite[Chapter XVIII, Section $5$]{Dautray.Lions:92}, \cite[Section $7.2$]{Evans:10}, \cite[Theorem 3.2]{Ladyzhenskaya:85}, \cite[Theorem 8.1]{Lions.Magenes:72} for the acoustic wave equation, we search for a solution of \eqref{eq:model-problem} in the \emph{trial} space
\begin{equation}
	\label{eq:WaveTrialSpace}
	\trial := \{\widetilde{u} \in L^2(H^1_0(\Omega)) \cap H^2(H^{-1}(\Omega)) \mid \widetilde u(0) = 0, \: \partial_t \widetilde u(0) = 0 \}
\end{equation}
equipped with the norm
\begin{equation}
	\label{eq:WaveTrialNorm}
	\|\cdot\|^2_\trial:=\|\cdot\|^2_{L^2(H^1_0)} + \frac{1}{c^4}\|\partial_{tt} \cdot\|^2_{L^2(H^{-1})}.
\end{equation}
Then, the first equation in \eqref{eq:model-problem} suggests to consider the \emph{test} and the \emph{data} space as
\begin{equation*}
\label{eq:WaveTestDataSpace}
\test := L^2(H^{1}_0(\Omega)) \qquad \text{and} \qquad \data := L^2(H^{-1}(\Omega))	
\end{equation*}
equipped with the norms 
\begin{equation}
\label{eq:WaveTestDataNorm}
\|\cdot\|_{\test} := \|\cdot\|_{L^2(H^1_0)}  
\qquad \text{and} \qquad
\|\cdot\|_{\data} := \|\cdot\|_{L^2(H^{-1})}.
\end{equation}

\begin{remark}[Embedding theorems]
\label{rem:embedding-theorem}
According to \cite[Theorems 2.3, 3.1]{Lions.Magenes:72}, there are continuous embeddings of $L^2(H^1_0(\Omega)) \cap H^2(H^{-1}(\Omega))$ into the spaces $H^1(L^2(\Omega))$, $L^\infty(\mathbb H^{1/2}(\Omega))$ and $W^{1,\infty}(\mathbb H^{-1/2}(\Omega))$, cf. \eqref{eq:SpectralRepresentationOfBochnerSpaces}-\eqref{eq:InterpolationSpaces} below. The embeddings are sharp, in that the differentiability and integrability indices of the latter spaces cannot be made larger.
\end{remark}

With the above spaces at hand, we consider the following weak formulation of \eqref{eq:model-problem}: find $u \in \trial$ such that
\begin{equation}
\label{eq:model-problem-weak}
\mathcal{L} u := \frac{1}{c^2}\partial_{tt} u - \Delta u = f \quad \text{in $\data$}
\end{equation}
where the differential operators are meant in weak form. We are interested in the question whether $\mathcal{L}$ establishes an isomorphism between $\trial$ and $\data$. Since $\mathcal{L}$ is bounded, this is equivalent to asking whether \eqref{eq:isomorphism} holds.

\begin{remark}[Data space]
\label{rem:data-space}
In the classical well-posedness framework established in \cite{Dautray.Lions:92, Evans:10, Ladyzhenskaya:85, Lions.Magenes:72}, the source term $f$ is required to possess higher regularity than in \eqref{eq:model-problem-weak}, namely $f \in \mathbb D := L^2(L^2(\Omega))$, under which condition \eqref{eq:hadamard-well-posedness} is satisfied. However, we are  led to enlarge the admissible data space from $\mathbb D$ to $\data$ in order to guarantee the boundedness of $\mathcal{L}$. The remaining question is whether $\data$ is too large: this is further explored in the following sections.
\end{remark}

\begin{remark}[Dimensional analysis]
\label{rem:dimensional-analysis}
Another justification for our definition of the data space is obtained by dimensional analysis. Indeed, for $d = 3$, the norm $\|u\|_\trial$ of the solution and the norm $\|f\|_{\data}$ of the source term in \eqref{eq:model-problem-weak} have the same physical dimension.  The same observation motivates also the scaling by the constant $c$ in \eqref{eq:WaveTrialNorm}.
\end{remark}

\begin{remark}[Operator components]
\label{rem:OperatorComponents}
    Exploiting the definition of $\|\cdot\|_\trial$ and $\|\cdot\|_{\data}$ above, we infer that \eqref{eq:isomorphism} holds true in this setting if and only if the norms
    \begin{equation*}
        \|(c^{-2}\partial_{tt} - \Delta) \cdot\|_{L^2(H^{-1})}
        \qquad \text{and} \qquad
        \Big(\|c^{-2}\partial_{tt} \cdot\|^2_{L^2(H^{-1})} + \|-\Delta \cdot\|^2_{L^2(H^{-1})}\Big)^{\frac{1}{2}}
    \end{equation*}
    are equivalent on $\trial$. Thus, we are led to ask whether we can separately control the space and time differential operators in the definition of $\mathcal{L}$ with respect to the data norm. For the heat equation, i.e. in the parabolic case, the answer is known to be affirmative, see e.g. \cite[Section 65]{Ern.Guermond:21-III} and Section~\ref{SS:Heat equation} below. 
\end{remark}

\subsection{Function spaces} 
We adopt an eigenfunction-based representation of the Bochner spaces used in this paper, which provides a convenient spectral interpretation of the associated norms. Denote by $(e_k)_{k=1}^{+\infty} \subseteq H^1_0(\Omega)$ and $(\lambda_k)_{k=1}^{+\infty} \subseteq \R$ the eigenfunctions and the eigenvalues of the  Laplacian $-\Delta: H^1_0(\Omega) \to H^{-1}(\Omega)$, with the normalization $\|e_k\|_{L^2(\Omega)} = 1$ for $k \geq 1$. Recall that the eigenvalues are such that 
\begin{equation}
\label{eq:eigenvalues}
0 < \lambda_k \nearrow +\infty.
\end{equation}
For $s\in[0,1]$, define
\begin{equation}
    \label{eq:SpectralRepresentationOfBochnerSpaces}
     L^2(\mathbb H^s(\Omega)) := \left\{w\in L^2(L^2(\Omega)) \mid \|w\|_{L^2(\mathbb H^s)}^2 <+\infty \right\},
\end{equation}
where, writing
\begin{equation}
    \label{eq:FunctionExpansion}
    w = \sum_{k=1}^{+\infty}w_k e_k \qquad w_k:=(w,\,e_k)_{L^2(\Omega)}\in L^2(0,T),\quad k\ge 1
\end{equation}
the norm is given by
\begin{equation}
    \label{eq:NormRepresentation}
    \|w\|_{L^2(\mathbb H^s)}^2 := \sum_{k=1}^{+\infty} \lambda_k^s\|w_k\|^2_{L^2(0,T)}.
\end{equation}
According to \cite[Section $2.5.1$, Appendix A]{Lischke.Pang.Gulian:20} and \cite[Chapter 1, Theorem 12.6]{Lions.Magenes:72}, denoting with $[\cdot,\,\cdot]_s$ the intermediate (i.e. real interpolation) space of index $s$, it holds
\begin{equation}
    \label{eq:InterpolationSpaces}
        L^2(\mathbb H^s(\Omega))  = L^2([H^1_0(\Omega), L^2(\Omega)]_s) = [L^2(H^1_0(\Omega)),\,L^2(L^2(\Omega))]_s.
\end{equation}
We denote the corresponding dual spaces as
\begin{equation*}
L^2(\mathbb H^{-s}(\Omega)) := L^2(\mathbb H^s(\Omega))^*.
\end{equation*}
Introducing the coefficients $g_k := \langle g, e_k\rangle \in L^2(0,T)$, $k \geq 1$, for $g \in L^2(\mathbb H ^{-s})$, a duality argument reveals
\begin{equation}
\label{eq:DualNormRepresentation}
    \|g\|_{L^2(\mathbb H^{-s})}^2 = \sum_{k=1}^{+\infty} \lambda_k^{-s}\|g_k\|^2_{L^2(0,T)}.
\end{equation}

\subsection{Eigenfunctions expansion}
\label{SS:EigenfunctionsExpansion}
The IBVP \eqref{eq:model-problem-weak} can be equivalently rephrased as an infinite system of second-order linear IVPs, by expanding the solution and the datum on the eigenfunctions of the Laplacian, cf. \cite[Chapter XV]{Dautray.Lions:92}. Remarkably, many properties of the IBVP can be inferred by just analyzing one such IVP, with the advantage of having an explicit solution formula, see \eqref{eq:model-problem-waek-reduced-solution} below. We refer, e.g., to \cite{Zank:20, Steinbach.Zank:22, Hoonhout.Loscher.UrzuaTorres:25} for previous uses of this observation.

Focusing on the $k$-th component in the eigenfunction expansion, $k \geq 1$, the spaces $\trial$, $\test$ and $\data$ are reduced to 
\begin{equation*}
\trial_k := \{ \widetilde u_k \in H^2(0,T) \mid \widetilde u_k(0) = 0 = \widetilde u_k'(0) \} 	
\end{equation*}
with
\begin{equation}
	\label{eq:norm-trial-exp}
\|\cdot\|^2_{\trial_k} := \lambda_k \|\cdot\|^2_{L^2(0,T)} + \frac{1}{c^4 \lambda_k} \|\partial_{tt} \cdot\|^2_{L^2(0,T)}
\end{equation}
as well as (see \eqref{eq:NormRepresentation} and \eqref{eq:DualNormRepresentation})
\begin{equation}
	\label{eq:spaces-expanded}
	\begin{aligned}
		&\test_k := L^2(0,T) && \text{with} && \|\cdot\|^2_{\test_k} := \lambda_k \|\cdot\|^2_{L^2(0,T)},\\
		&\data_k := L^2(0,T) && \text{with} && \|\cdot\|^2_{\data_k} := \frac{1}{\lambda_k} \|\cdot\|^2_{L^2(0,T)},
	\end{aligned}
\end{equation}
Analogously, the IBVP \eqref{eq:model-problem-weak} is reduced as follows: find $u_k \in \trial_k$ such that 
\begin{equation}
	\label{eq:model-problem-weak-reduced}
	\mathcal{L}_k u_k := \frac{1}{c^2}u_k'' +\lambda_k u_k = f_k \quad \text{in $\data_k$}.
\end{equation}
By Duhamel's principle, the solution of this IVP can be represented as
\begin{equation}
	\label{eq:model-problem-waek-reduced-solution}
	u_k(t) = \frac{c^2}{\sqrt{\mu_k}}\int_0^t f_k(s) \sin{(\sqrt{\mu_k}(t-s))}\,ds
\end{equation}
where
\begin{equation}
\label{eq:mu-k}
	\mu_k := c^2 \lambda_k
\end{equation}

with $\sqrt{\mu_k}$ playing the role of resonance frequency in  \eqref{eq:model-problem-weak-reduced}. Thus, the question of whether the IBVP \eqref{eq:model-problem-weak} establishes an isomorphism between $\trial$ and $\data$ is equivalent to asking whether the IVP \eqref{eq:model-problem-weak-reduced} establishes an isomorphism between $\trial_k$ and $\data_k$, with the caveat that the equivalence
\begin{equation}
\label{eq:isomorphism-k-mode}
  \| u_k \|_{\trial_k} \eqsim \|f_k\|_{\data_k}.
\end{equation}
holds with a constant independent of  $\lambda_k$.

\begin{remark}[Boundary conditions]
\label{rem:BoundaryConditions}
The eigenfunction expansion discussed in this section and our subsequent analysis do not directly extend to the case of the impedance boundary condition
\begin{equation*}
\frac{\theta}{c} \partial_t u + \partial_n u  = 0 \quad \text{in $\partial \Omega \times (0,T)$},
\end{equation*}
where $\theta > 0$. Note, however, that the extreme cases $\theta = 0$ and $\theta = +\infty$ correspond to Neumann and Dirichlet boundary conditions, to which our arguments apply.
\end{remark}

\section{Inspecting Zank's counter-example}
\label{S:Zank-counter-example}
This section investigates the mechanism behind the counter-example by Zank \cite[Theorem 4.2.23]{Zank:20}, elucidating the fundamental role played by resonance. The main result is Corollary~\ref{C:inf-sup-instability}, stating that the bilinear form induced by the operator $\mathcal{L}$ in \eqref{eq:model-problem-weak} does not fulfill  an inf-sup condition with respect to the norms $\|\cdot\|_\trial $ and $ \|\cdot\|_\test$ defined above. Therefore, the weak formulation proposed in Section~\ref{SS:functional-setting} for our model IBVP is not well-posed in the sense of the equivalence \eqref{eq:isomorphism}.

\subsection{Resonant waves}
\label{SS:resonant-waves} 

Let $k \geq 1$ and consider the IVP \eqref{eq:model-problem-weak-reduced} with source term
\begin{equation}
	\label{eq:source-term-wave}
	f_k(t) = f_{\omega}(t) := \cos(\omega t), \quad \omega > 0.
\end{equation}
Owing to Fourier analysis \cite{Dym.McKean:72}, any other source term in $\data_k$ can be decomposed into an infinite superposition of such elementary sources. The corresponding solution is given by
\begin{equation}
	\label{eq:solution-wave}
	u_k(t) = u_{k,\omega}(t) := \begin{cases}
		 \displaystyle \frac{-2c^2}{\mu_k-\omega^2}\sin\left(\frac{\omega + \sqrt{\mu_k}}{2}t\right)\sin\left(\frac{\omega-\sqrt{\mu_k}}{2}t\right) & \text{if } \omega \neq \sqrt{\mu_k} \\[10pt]
		 \displaystyle\frac{c^2 t}{2\sqrt{\mu_k}}\sin(\sqrt{\mu_k} t) & \text{if } \omega = \sqrt{\mu_k}.
	\end{cases}
\end{equation}
thanks to the representation formula \eqref{eq:model-problem-weak-reduced} and elementary calculations.

As long as $\omega \neq \sqrt{\mu_k}$, the solution exhibits bounded oscillation. Still, the oscillation grows in amplitude for $\omega \to \sqrt{\mu_k}$. In the limiting case $\omega = \sqrt{\mu_k}$, the source  frequency equals the resonance frequency, giving rise to a resonant solution , where the amplitude of the oscillation grows proportionally to time, see Figure~\ref{fig:resonance-illustration}.

\begin{figure}[ht!]
	\centering
	\includegraphics[width=1\linewidth]{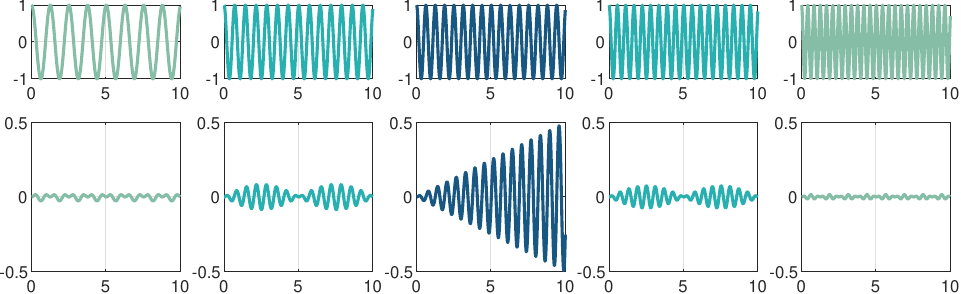}
	\caption{Source term $f_\omega$ (first row) and solution $u_{k,\omega}$ (second row) for the IVP \eqref{eq:model-problem-weak-reduced} in a progression from a non resonant case to the resonant one and back.}
	\label{fig:resonance-illustration}
\end{figure}

This class of source terms is not fully representative, since it includes only isolated frequencies but not arbitrary linear combinations of them, cf. Section~\ref{SS:FourierExpansion} below. Nevertheless, it is large enough to illustrate the mechanism behind the counter-example by Zank. The norm $\|\cdot\|_{\data_k}$ proposed in Section~\ref{SS:EigenfunctionsExpansion} does not distinguish if the source  frequency is close or not to the resonance frequency. Conversely, it must be expected that the trial norm $\|\cdot\|_{\trial_k}$ makes a substantial distinction between resonant and non-resonant waves, due to the growing amplitude of the oscillation in the former case. The next subsection complements this intuition by precise calculations. All results have been double-checked with the help of the Matlab Symbolic Math Toolbox. A copy of the source code is available at the public repository
\begin{center}
\label{url}
	\url{https://github.com/DavideTerazzi/resonant_waves_symbolic_tool.git}
\end{center}

\subsection{Inf-sup instability} 
\label{SS:inf-sup-instability}
Our first step consists in establishing a relation between the norm of the source term $f_\omega$ and the one of the corresponding solution $u_{k,\omega}$ of the IVP \eqref{eq:model-problem-weak-reduced}.

\begin{proposition}[Amplification constant]
\label{P:amplification-constant}
Let $f_\omega$ and $u_{k,\omega}$ be as in \eqref{eq:source-term-wave} and \eqref{eq:solution-wave}. The respective norms from Section~\ref{SS:EigenfunctionsExpansion} satisfy the relation
\begin{equation*}
\label{eq:amplification-constant}
	\|u_{k,\omega}\|^2_{\trial_k} = (1+C_{k,\omega})\, \|f_\omega\|^2_{\data_k}.
\end{equation*}
The constant $C_{k,\omega} \in \R$ is given by
\begin{subequations}
	\label{eq:amplification-constant-def}
\begin{equation}
\label{eq:amplification-constant-def-res}
\begin{aligned}
    C_{k,\omega} & :=  \frac{\mu_k}{1 + \sinc(2\sqrt{\mu_k} T)}\Bigg[\frac{T^2}{6} -\frac{T^2}{2}\sinc(2\sqrt{\mu_k} T) \\ & - \frac{1}{4\mu_k}\cos(2\sqrt{\mu_k} T) + \frac{1}{4\mu_k}\sinc(2\sqrt{\mu_k} T)\Bigg]
\end{aligned}
\end{equation}
for $\omega = \sqrt{\mu_k}$ and
\begin{equation}
\label{eq:amplification-constant-def-nonres}
\begin{aligned}
C_{k,\omega} & :=  \frac{2\mu_k}{1 + \sinc(2\omega T)} \frac{\mu_k + \omega^2}{(\mu_k - \omega^2)^2}  \Bigg[ 1 - \sinc((\sqrt{\mu_k} + \omega)T) \\ & - \sinc((\sqrt{\mu_k} - \omega)T) + \frac{\mu_k\sinc(2\sqrt{\mu_k} T) + \omega^2\sinc(2\omega T)}{\mu_k + \omega^2} \Bigg].
\end{aligned}
\end{equation}
\end{subequations}
otherwise.
\end{proposition}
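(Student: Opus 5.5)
The proof is a direct computation: the three quantities $\|f_\omega\|^2_{\data_k}$, $\lambda_k\|u_{k,\omega}\|^2_{L^2(0,T)}$ and $\frac{1}{c^4\lambda_k}\|u_{k,\omega}''\|^2_{L^2(0,T)}$ are evaluated in closed form, and $C_{k,\omega}$ is then defined by the ratio. First, using $\cos^2(\omega t)=\tfrac12(1+\cos(2\omega t))$,
\begin{equation*}
\|f_\omega\|^2_{\data_k}=\frac{1}{\lambda_k}\int_0^T\cos^2(\omega t)\,dt=\frac{T}{2\lambda_k}\bigl(1+\sinc(2\omega T)\bigr).
\end{equation*}
This explains the denominator $1+\sinc(2\omega T)$ in \eqref{eq:amplification-constant-def} (with $\omega=\sqrt{\mu_k}$ in the resonant case). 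It is strictly positive, since $\sinc>-1$ on $\R\setminus\{0\}$.

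To avoid differentiating $u_{k,\omega}$ twice, I will use the equation \eqref{eq:model-problem-weak-reduced}: $c^{-2}u''=f_\omega-\lambda_k u$. Hence
\begin{equation*}
\frac{1}{c^4\lambda_k}\|u''\|^2_{L^2}=\frac{1}{\lambda_k}\|f_\omega-\lambda_k u\|^2_{L^2}
=\|f_\omega\|^2_{\data_k}-2(f_\omega,u)_{L^2}+\lambda_k\|u\|^2_{L^2},
\end{equation*}
and therefore
\begin{equation*}
\|u\|^2_{\trial_k}=\|f_\omega\|^2_{\data_k}+2\bigl(\lambda_k\|u\|^2_{L^2}-(f_\omega,u)_{L^2}\bigr).
\end{equation*}
This gives the structure ``$1+C_{k,\omega}$'' immediately:
\begin{equation*}
C_{k,\omega}=\frac{2\bigl(\lambda_k\|u\|^2_{L^2}-(f_\omega,u)_{L^2}\bigr)}{\|f_\omega\|^2_{\data_k}},
\end{equation*}
so only two elementary integrals of trigonometric polynomials remain.

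For the non-resonant case I will not use the product form in \eqref{eq:solution-wave}. Instead I rewrite it as
\begin{equation*}
u=\frac{c^2}{\mu_k-\omega^2}\bigl(\cos(\omega t)-\cos(\sqrt{\mu_k}t)\bigr).
\end{equation*}
Expanding squares and products into sums of cosines at the frequencies $0$, $2\omega$, $2\sqrt{\mu_k}$ and $\sqrt{\mu_k}\pm\omega$, and integrating each term, produces exactly the $\sinc$ terms appearing in \eqref{eq:amplification-constant-def-nonres}. Multiplying by $\lambda_k c^4=\mu_k c^2$ and dividing by $\|f_\omega\|^2_{\data_k}$ then yields the prefactor $2\mu_k(\mu_k+\omega^2)/(\mu_k-\omega^2)^2$, after collecting terms over the common factor $\mu_k+\omega^2$.

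For the resonant case, $u=\frac{c^2t}{2\sqrt{\mu_k}}\sin(\sqrt{\mu_k}t)$. The needed integrals are of the form $\int_0^T t^2\sin^2$ and $\int_0^T t\sin\cos$. They are handled by integration by parts, or by the identities $\sin^2=\tfrac12(1-\cos 2\cdot)$ and $\sin\cos=\tfrac12\sin 2\cdot$; these generate the terms $T^2/6$, $T^2\sinc(2\sqrt{\mu_k}T)$, $\cos(2\sqrt{\mu_k}T)/\mu_k$ and $\sinc(2\sqrt{\mu_k}T)/\mu_k$. Alternatively, this case is the limit $\omega\to\sqrt{\mu_k}$ of the non-resonant one, which serves as a consistency check.

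The main obstacle is purely bookkeeping: in the non-resonant case, the algebraic simplification of the cross terms into the stated grouped form, in particular the combination $\bigl(\mu_k\sinc(2\sqrt{\mu_k}T)+\omega^2\sinc(2\omega T)\bigr)/(\mu_k+\omega^2)$. I will verify it symbolically, as the paper does with the referenced Matlab code.
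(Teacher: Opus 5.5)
Your route is the paper's own. You eliminate $\frac{1}{c^4\lambda_k}\|u''\|^2_{L^2(0,T)}$ through the equation to get $\|u\|^2_{\trial_k}=\|f_\omega\|^2_{\data_k}+2\lambda_k\|u\|^2_{L^2(0,T)}-2(f_\omega,u)_{L^2(0,T)}$, and then integrate trigonometric expressions. Writing $u=\frac{c^2}{\mu_k-\omega^2}\bigl(\cos(\omega t)-\cos(\sqrt{\mu_k}t)\bigr)$ is only the product-to-sum form of \eqref{eq:solution-wave}. In the non-resonant case the bookkeeping closes exactly: $2\lambda_k\|u\|^2_{L^2(0,T)}-2(f_\omega,u)_{L^2(0,T)}$ equals $\frac{c^2T}{(\mu_k-\omega^2)^2}$ times $(\mu_k+\omega^2)\bigl(1-\sinc((\sqrt{\mu_k}+\omega)T)-\sinc((\sqrt{\mu_k}-\omega)T)\bigr)+\mu_k\sinc(2\sqrt{\mu_k}T)+\omega^2\sinc(2\omega T)$. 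Dividing by $\|f_\omega\|^2_{\data_k}=\frac{c^2T}{2\mu_k}\bigl(1+\sinc(2\omega T)\bigr)$ gives \eqref{eq:amplification-constant-def-nonres}.

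The problem is the resonant case, which you only sketch by listing which terms appear, without tracking signs. Set $S:=\sinc(2\sqrt{\mu_k}T)$ and $\kappa:=\cos(2\sqrt{\mu_k}T)$. Carrying out the integrals gives
\[
2\lambda_k\|u\|^2_{L^2(0,T)}=\frac{c^2T}{2}\Bigl[\frac{T^2}{6}-\frac{T^2}{2}S-\frac{\kappa}{4\mu_k}+\frac{S}{4\mu_k}\Bigr],
\qquad
2(f_\omega,u)_{L^2(0,T)}=\frac{c^2T}{4\mu_k}(S-\kappa),
\]
and $\|f_\omega\|^2_{\data_k}=\frac{c^2T}{2\mu_k}(1+S)$. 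Hence
\[
C_{k,\sqrt{\mu_k}}=\frac{\mu_k}{1+S}\Bigl[\frac{T^2}{6}-\frac{T^2}{2}S+\frac{\kappa}{4\mu_k}-\frac{S}{4\mu_k}\Bigr].
\]
The last two terms carry the opposite sign to \eqref{eq:amplification-constant-def-res}. In fact the stated bracket is exactly the contribution of $2\lambda_k\|u\|^2_{L^2(0,T)}$ alone; the cross term $-2(f_\omega,u)_{L^2(0,T)}$ has been dropped. So the resonant identity as stated is false, and your sketch cannot close on it.

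Two independent checks confirm this. For $\mu_k=c=1$ and $T=\pi/2$, a direct computation gives $\|u\|^2_{\trial_k}/\|f_\omega\|^2_{\data_k}=\frac34+\frac{\pi^2}{24}$, whereas \eqref{eq:amplification-constant-def-res} predicts $\frac54+\frac{\pi^2}{24}$. As $T\to 0$, $u\approx c^2t^2/2$ gives $C_{k,\sqrt{\mu_k}}\approx-\mu_kT^2/3$; this matches the corrected formula, while the stated one is $O(T^4)$.

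The consistency check you propose, the limit $\omega\to\sqrt{\mu_k}$ of \eqref{eq:amplification-constant-def-nonres}, also lands on the corrected formula. So the second limit displayed in the proof of Proposition~\ref{P:frequency-dependence} has the same sign slip. The error is harmless for Corollary~\ref{C:inf-sup-instability}, which only uses the growth $C_{k,\sqrt{\mu_k}}\sim\mu_kT^2/6$. Still, keep your argument and prove the corrected resonant formula rather than the stated one.
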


\begin{proof}
We insert \eqref{eq:model-problem-weak-reduced} into \eqref{eq:norm-trial-exp}. Elementary manipulations reveal
\begin{equation*}
\|u_{k,\omega}\|^2_{\trial_k} 
=
\frac{1}{\lambda_k} \|f_\omega\|^2_{L^2(0,T)} + 2\lambda_k \|u_{k,\omega}\|^2_{L^2(0,T)} - 2 (f_\omega, u_{k,\omega})_{L^2(0,T)}. 
\end{equation*}
In view of \eqref{eq:source-term-wave}, the first term on the right-hand side is such that
\begin{equation*}
	\frac{1}{\lambda_k}\|f_\omega\|^2_{L^2(0,T)} = \frac{T}{2\lambda_k}\left(1+\sinc(2\omega T)\right)
\end{equation*}
For $\omega \neq \sqrt{\mu_k}$, the second term satisfies
\begin{align*}
    2\lambda_k\|u_{k,\omega}\|_{L^2(0,T)}^2 & =  \frac{2c^2\mu_kT}{(\mu_k-\omega^2)^2} \Bigg[ 1 + \frac{1}{2}\sinc(2\omega T)+ \frac{1}{2}\sinc(2\sqrt{\mu_k}T) \\ &  -\sinc((\sqrt{\mu_k}+\omega)T) -\sinc((\sqrt{\mu_k}-\omega)T)\Bigg].
\end{align*}
according to the first case in \eqref{eq:solution-wave}. Finally, for the third term, we have
\begin{align*}
    2(f_\omega,u_{k,\omega})_{L^2(0,T)} & = \frac{c^2(\mu_k-\omega^2)T}{(\mu_k-\omega^2)^2} \Bigg[ 1 + \sinc(2\omega T) \\ &  -\sinc((\sqrt{\mu_k}+\omega)T) -\sinc((\sqrt{\mu_k}-\omega)T)\Bigg].
\end{align*}
The combination of the above identities and the second line in \eqref{eq:spaces-expanded} yield the claimed expression of the constant $C_{k,\omega}$. For $\omega = \sqrt{\mu_k}$, we proceed by the same argument, using the second case in \eqref{eq:solution-wave}
\end{proof}

Our next step consists in clarifying how the constant $C_{k, \omega}$ from Proposition~\ref{P:amplification-constant} depends on the frequency $\omega$. 

\begin{proposition}[Frequency dependence]
\label{P:frequency-dependence}
Let $C_{k, \omega } \in \R$ be as in \eqref{eq:amplification-constant-def}. The mapping $(0,+\infty) \ni \omega \mapsto C_{k,\omega}$ is continuous and we have
\begin{equation*}
\label{eq:frequency-dependence}
\left |\lim_{\omega \searrow 0} C_{k,\omega}\right| \leq C 
\qquad \text{ and } \qquad 
\lim_{\omega \nearrow +\infty} C_{k,\omega} = 0
\end{equation*}
with the constant $C$ depending only on $T$ and $c$.
\end{proposition}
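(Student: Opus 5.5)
Three tasks follow directly from the explicit formulas of Proposition~\ref{P:amplification-constant}: continuity in $\omega$, the limit as $\omega \searrow 0$, and the limit as $\omega \nearrow +\infty$.

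\emph{Continuity.} Away from $\omega=\sqrt{\mu_k}$, the expression \eqref{eq:amplification-constant-def-nonres} is a composition of continuous functions. Note that $1+\sinc(x)>0$ for every $x$, since $\sinc(x)>-1$. So the only issue is the removable singularity at $\omega=\sqrt{\mu_k}$.

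Rather than Taylor-expanding the bracket to fourth order in $\varepsilon=\sqrt{\mu_k}-\omega$, I would argue structurally. The solution $u_{k,\omega}$ is given by the Duhamel formula \eqref{eq:model-problem-waek-reduced-solution} with datum $f_\omega=\cos(\omega\cdot)$. The map $\omega\mapsto f_\omega$ is continuous into $L^2(0,T)$, and the solution operator is continuous from $L^2(0,T)$ into $\trial_k$, because $u_k''=c^2(f_k-\lambda_k u_k)$. Hence $\omega\mapsto\|u_{k,\omega}\|_{\trial_k}$ is continuous. The same holds for $\omega\mapsto\|f_\omega\|_{\data_k}$, which is nonvanishing. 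Therefore
\[
C_{k,\omega}=\frac{\|u_{k,\omega}\|^2_{\trial_k}}{\|f_\omega\|^2_{\data_k}}-1
\]
is continuous on $(0,+\infty)$. In particular its value at $\omega=\sqrt{\mu_k}$ agrees with the limit of \eqref{eq:amplification-constant-def-nonres}, and no delicate expansion is needed.

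\emph{Limit $\omega\searrow0$.} Set $a:=\sqrt{\mu_k}T$. As $\omega\to0$, we have $\sinc(2\omega T)\to1$, $\sinc((\sqrt{\mu_k}\pm\omega)T)\to\sinc(a)$, and $(\mu_k+\omega^2)/(\mu_k-\omega^2)^2\to1/\mu_k$. The fraction in the bracket tends to $\sinc(2a)$. This gives
\[
\lim_{\omega\searrow0}C_{k,\omega}=1-2\sinc(a)+\sinc(2a).
\]
Since $|\sinc|\le1$, this quantity is bounded by $4$, a universal constant and a fortiori one depending only on $T$ and $c$. If a sharper statement uniform in $k$ is desired, one notes $1-2\sinc(a)+\sinc(2a)\to1$ as $a\to\infty$.

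\emph{Limit $\omega\nearrow+\infty$.} The prefactor behaves like
\[
\frac{2\mu_k(\mu_k+\omega^2)}{(1+\sinc(2\omega T))(\mu_k-\omega^2)^2}=O\bigl(\mu_k\,\omega^{-2}\bigr),
\]
because $1+\sinc(2\omega T)\to1$. The bracket stays bounded: it tends to $1$, since all $\sinc$ terms with arguments $\to\infty$ vanish and $\frac{\mu_k\sinc(2a)+\omega^2\sinc(2\omega T)}{\mu_k+\omega^2}\to0$. Hence $C_{k,\omega}\to0$ at rate $O(\omega^{-2})$ for fixed $k$.

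\emph{Main obstacle.} The only real obstacle is the continuity at the resonant frequency, i.e.\ that \eqref{eq:amplification-constant-def-res} is the limit of \eqref{eq:amplification-constant-def-nonres}. The structural argument above handles it. As a fallback, I would expand the bracket in $\varepsilon$, showing that the orders $\varepsilon^0$ and $\varepsilon^1$ cancel, and match the $\varepsilon^2$ coefficient with \eqref{eq:amplification-constant-def-res}.
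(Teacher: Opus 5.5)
Your proof is correct, but for continuity it takes a different route from the paper. For the two limits you do what the paper does: pass to the limit directly in \eqref{eq:amplification-constant-def-nonres}. This gives $1-2\sinc(\sqrt{\mu_k}T)+\sinc(2\sqrt{\mu_k}T)$ as $\omega\searrow0$, and $O(\mu_k\omega^{-2})$ decay as $\omega\nearrow+\infty$.

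The difference is continuity at $\omega=\sqrt{\mu_k}$. The paper Taylor-expands \eqref{eq:amplification-constant-def-nonres} around $\omega=\sqrt{\mu_k}$ and checks that the limit reproduces \eqref{eq:amplification-constant-def-res}, i.e.\ it tests the two closed forms against each other. You instead prove continuity of the intrinsic quantity $\|u_{k,\omega}\|^2_{\trial_k}/\|f_\omega\|^2_{\data_k}-1$, using:
\begin{itemize}
\item continuity of $\omega\mapsto f_\omega$ in $L^2(0,T)$;
\item boundedness of the solution map $L^2(0,T)\to\trial_k$;
\item $\|f_\omega\|_{\data_k}>0$.
\end{itemize}
You then transfer this to the formulas through Proposition~\ref{P:amplification-constant}.

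Your route is shorter, avoids computation, and proves the statement that actually matters. However, it takes both cases of Proposition~\ref{P:amplification-constant} on trust, whereas the paper's route also serves as a consistency check of the two formulas.

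That check is not vacuous. If you carry out the expansion you keep as a fallback, the second limit in the paper's proof comes out as $-\frac{T^2}{4}\sinc(2\sqrt{\mu_k}T)+\frac{1}{8\mu_k}\cos(2\sqrt{\mu_k}T)-\frac{1}{8\mu_k}\sinc(2\sqrt{\mu_k}T)$. The $\frac{1}{8\mu_k}$-terms have the opposite sign to those displayed in the paper. A direct evaluation at resonance confirms this. With $c=T=1$ and $\sqrt{\mu_k}=\pi$, one finds $C_{k,\omega}=\pi^2/6+1/4$, while \eqref{eq:amplification-constant-def-res} as printed gives $\pi^2/6-1/4$. So the two $\frac{1}{4\mu_k}$-terms in \eqref{eq:amplification-constant-def-res} should carry the opposite signs.

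Consequently, your sentence ``its value at $\omega=\sqrt{\mu_k}$ agrees with the limit of \eqref{eq:amplification-constant-def-nonres}'' is true for the amplification constant itself. It is not true for the resonant formula exactly as printed. This slip does not affect the leading term $\mu_kT^2/6$, nor any later use of the proposition.
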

\begin{proof}
For the continuity, it suffices showing that the right-hand side of \eqref{eq:amplification-constant-def-nonres} converges to the one of \eqref{eq:amplification-constant-def-res} as $\omega \to \sqrt{\mu_k}$. By applying several high-order Taylor expansions around the point $\omega = \sqrt{\mu_k}$, we obtain
\begin{equation*}
	\lim_{\omega \to \sqrt{\mu_k}}\frac{\mu_k+\omega^2}{(\mu_k-\omega^2)^2}\Bigg[1-\sinc((\sqrt{\mu_k}-\omega)T)\Bigg] = \frac{T^2}{12}
\end{equation*}
and
\begin{equation*}
	\begin{aligned}
		\lim_{\omega \to \sqrt{\mu_k}} &  \frac{\mu_k+\omega^2}{(\mu_k-\omega^2)^2}\Bigg[-\sinc((\sqrt{\mu_k}+\omega)T)+ \frac{\mu_k\sinc(2\sqrt{\mu_k} T) + \omega^2\sinc(2\omega T)}{\mu_k + \omega^2}\Bigg] = \\ & = -\frac{T^2}{4}\sinc(2\sqrt{\mu_k} T) - \frac{1}{8\mu_k}\cos(2\sqrt{\mu_k} T) + \frac{1}{8\mu_k}\sinc(2\sqrt{\mu_k} T).
	\end{aligned}
\end{equation*}
Using these identities with \eqref{eq:amplification-constant-def-nonres} readily yields \eqref{eq:amplification-constant-def-res}. 

For the limit $\omega \searrow 0$, first-order Taylor expansions reveal
\begin{equation*}
	\lim\limits_{\omega \searrow 0} C_{k, \omega} = 1-2\sinc(\sqrt{\mu_k}T)+\sinc(2\sqrt{\mu_k}T).
\end{equation*}
We obtain the claimed estimate by invoking the boundedness of the function sinc.

Finally, the limit $\omega \nearrow +\infty$ is obtained by elementary arguments.
\end{proof}

We complement Proposition~\ref{P:amplification-constant} by plotting the function $\omega \mapsto 1+C_{k, \omega}$ for increasing values of $k$, see Figure~\ref{fig:amplification-constant}. We observe a bell-shaped profile, with the maximum point around the resonant frequency $\omega = \sqrt{\mu_k}$. The maximum  value grows asymptotically as $(T\sqrt{\mu_k})^2/6$ for $k\to +\infty$, in agreement with \eqref{eq:amplification-constant-def-res}. The limits for $\omega \searrow 0$ and $\omega \nearrow +\infty$ appear to remain bounded irrespective of $k$ as expected.

\begin{figure}[ht!]
	\centering
	\includegraphics[width=0.6\linewidth]{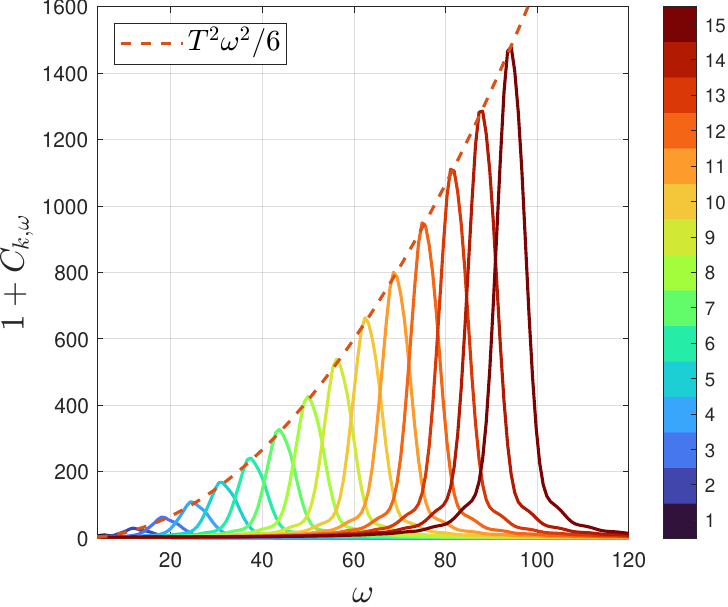}
	\caption{Plot of the amplification constant in Proposition \ref{P:amplification-constant} for $\sqrt{\mu_k} = 2\pi k$ with $k = 1,\dots,15$. The final time is fixed to $T = 1$.}
	\label{fig:amplification-constant}
\end{figure}

Owing to the above discussion, we are in position to state our main result concerning the well-posedness of the IBVP \eqref{eq:model-problem-weak} in the setting of Section~\ref{SS:functional-setting}. This is formulated in terms of the bilinear form $\mathcal{B}: \trial \times \test \to \R$ induced by the operator $\mathcal{L}$, namely
\begin{equation}
\label{eq:bilinear-form}
\mathcal{B}(\widetilde u, v) := \int_0^T \Big ( \left\langle \partial_{tt} \widetilde u, v\right\rangle + (\nabla \widetilde u, \nabla v)_{L^2(\Omega)} \Big) \dt
\end{equation} 
with $\left\langle \cdot, \cdot\right\rangle$ denoting the dual pairing of $H^{-1}(\Omega)$ and $H^1_0(\Omega)$.

\begin{corollary}[Inf-sup instability]
\label{C:inf-sup-instability}
The bilinear form in \eqref{eq:bilinear-form} above is such that
\begin{equation}
\label{eq:inf-sup-instability}
\inf_{\widetilde u \in \trial}\,\,\sup _{v \in \test} \,\frac{\mathcal{B}(\widetilde u,v)}{\|\widetilde u\|_\trial\|v\|_\test} = 0.
\end{equation}
Consequently, the IBVP \eqref{eq:model-problem-weak} does not satisfy the equivalence estimate \eqref{eq:isomorphism}.
\end{corollary}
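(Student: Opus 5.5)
The plan is to produce, for each $k$, a trial function $\widetilde u\in\trial$ that is separable in space and time and whose inf-sup quotient equals $(1+C_{k,\sqrt{\mu_k}})^{-1/2}$. Since $C_{k,\sqrt{\mu_k}}\to+\infty$ as $k\to\infty$, the infimum is then zero.

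\textbf{Choice of trial function.} I take $\widetilde u := u_{k,\omega}\, e_k$ with $\omega=\sqrt{\mu_k}$, the resonant solution from \eqref{eq:solution-wave}. It lies in $\trial$, because $u_{k,\omega}$ is smooth with $u_{k,\omega}(0)=u_{k,\omega}'(0)=0$ and $e_k\in H^1_0(\Omega)$. By \eqref{eq:norm-trial-exp} and the spectral representation \eqref{eq:NormRepresentation}, $\|\widetilde u\|_\trial=\|u_{k,\omega}\|_{\trial_k}$.

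\textbf{Computing the supremum.} Expand $v=\sum_j v_j e_j$. By orthogonality, only the $k$-th component survives:
\begin{equation*}
\mathcal{B}(\widetilde u,v)=\int_0^T \Big(u_{k,\omega}''+\lambda_k u_{k,\omega}\Big)v_k\,\dt .
\end{equation*}
Note that \eqref{eq:bilinear-form} carries no factor $c^{-2}$. I therefore either insert this factor consistently with \eqref{eq:model-problem-weak}, or observe that the constant $c$ only changes fixed factors, which are irrelevant as $k\to\infty$. With the factor inserted, the integrand is $\mathcal{L}_k u_{k,\omega}\,v_k=f_\omega v_k$.

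By Cauchy--Schwarz, with equality for $v=f_\omega e_k$, and since $\|v\|_\test\ge \lambda_k^{1/2}\|v_k\|_{L^2}$, the supremum over $v$ equals
\begin{equation*}
\lambda_k^{-1/2}\|f_\omega\|_{L^2(0,T)}=\|f_\omega\|_{\data_k}.
\end{equation*}
Hence the quotient equals $\|f_\omega\|_{\data_k}/\|u_{k,\omega}\|_{\trial_k}=(1+C_{k,\sqrt{\mu_k}})^{-1/2}$ by Proposition~\ref{P:amplification-constant}.

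\textbf{Main step: divergence of the constant.} It remains to show $C_{k,\sqrt{\mu_k}}\to\infty$ using \eqref{eq:amplification-constant-def-res}. Since $\mu_k=c^2\lambda_k\to\infty$ by \eqref{eq:eigenvalues}, we have $\sinc(2\sqrt{\mu_k}T)\to0$, so the prefactor $\mu_k/(1+\sinc)$ behaves like $\mu_k$. The bracket equals $T^2/6 + o(1)$, because the sinc term is multiplied by the bounded $T^2/2$ and tends to zero, while the terms with $1/(4\mu_k)$ vanish. Thus $C_{k,\sqrt{\mu_k}}\sim \mu_k T^2/6\to\infty$, and the infimum is $0$.

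\textbf{Consequence.} The failure of \eqref{eq:isomorphism} follows in the same way: the bound $\|u\|_\trial\lesssim\|f\|_{\data}$ would fail along the sequence $\widetilde u = u_{k,\sqrt{\mu_k}}e_k$. Equivalently, by the Banach--Ne\v{c}as--Babu\v{s}ka theorem, the inf-sup condition is necessary for $\mathcal{L}$ to be an isomorphism.

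The only delicate point is the asymptotics of the bracket. In particular, I must check that $1+\sinc(2\sqrt{\mu_k}T)$ stays bounded away from zero. This holds because $\sinc(x)>-0.23$ for all $x$.
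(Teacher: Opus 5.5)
Your proposal is correct and follows essentially the same route as the paper: the separable resonant wave $u_{k,\sqrt{\mu_k}}e_k$, the identity $\mathcal{B}(\widetilde u,v)=\int_0^T\langle \widetilde f,v\rangle\,\dt$, Proposition~\ref{P:amplification-constant}, and the blow-up of $C_{k,\sqrt{\mu_k}}$ as $k\to\infty$. Your version is slightly tighter than the paper's in three respects: you get the exponent $(1+C_{k,\sqrt{\mu_k}})^{-1/2}$ right, you read the divergence $C_{k,\sqrt{\mu_k}}\sim \mu_k T^2/6$ directly off \eqref{eq:amplification-constant-def-res} (Proposition~\ref{P:frequency-dependence} only concerns limits in $\omega$), and you flag the missing factor $c^{-2}$ in \eqref{eq:bilinear-form}.
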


\begin{proof}
Recall the eigenfunctions $(e_k)_{k=1}^{+\infty}$ of the Laplacian. For $k\geq 1$, consider the functions 
\begin{equation}
\label{eq:inf-sup-instability-proof}
\widetilde{u} = u_{k,\omega} e_k \in \trial
\qquad \textit{and} \qquad
\widetilde f = f_\omega e_k \in \data
\end{equation}
where $u_{k,\omega}$ and $f_\omega$ are as in \eqref{eq:solution-wave} and \eqref{eq:source-term-wave}, respectively, with $\omega > 0$. Since $u_{k, \omega}$ solves the IVP \eqref{eq:model-problem-weak-reduced} with source $f_\omega$, it follows that $\widetilde u$ solves the IBVP \eqref{eq:model-problem-weak} with source $\widetilde f$, hence
\begin{equation*}
	\mathcal{B} (\widetilde u, v) = \int_0^T \langle \widetilde f, v\rangle \dt \qquad\forall v \in \test.
\end{equation*}
This identity, definitions \eqref{eq:WaveTestDataNorm}, \eqref{eq:norm-trial-exp}, \eqref{eq:spaces-expanded} and \eqref{eq:inf-sup-instability-proof} and Proposition~\ref{P:amplification-constant} reveal
\begin{equation*}
\begin{split}
	\inf_{\widetilde u \in \trial}\,\,\sup _{v \in \test} \,\frac{\mathcal{B}(\widetilde u,v)}{\|\widetilde u\|_\trial\|v\|_\test} 
	&\leq
\sup _{v \in \test} \,\frac{\int_0^T \langle \widetilde f, v\rangle \dt}{\|\widetilde u\|_\trial\|v\|_\test} 
= 
\frac{\|f_\omega\|_{\data_k}}{\|u_{k,\omega}\|_{\trial_k}}
=
\frac{1}{1 + C_{k, \omega}}.
\end{split}
\end{equation*}
Taking $\omega = \sqrt{\mu_k}$, Proposition~\ref{P:frequency-dependence} entails $C_{k, \omega} \to +\infty$ as $k \to +\infty$. Inserting this observation in the right-hand side of the upper bound above implies \eqref{eq:inf-sup-instability}, because the left-hand side is nonnegative. 

The identity \eqref{eq:inf-sup-instability} readily implies that the IBVP \eqref{eq:model-problem-weak} does not fulfill \eqref{eq:isomorphism}  due to the Banach-Ne\v{c}as-Babu\v{s}ka theorem~\cite[Theorem~25.9]{Ern.Guermond:21-II}.
\end{proof}

It is worth noticing that Zank \cite{Zank:20} used the argument in the proof of Corollary~\ref{C:inf-sup-instability} to infer inf-sup instability for a different weak formulation of the acoustic wave equation. Indeed, it appears to us that Zank's counter-example unveils a general distinguishing property of wave equations. We elaborate on this observation in the next remarks.

\begin{remark}[Space-time coupling] 
\label{rem:SpaceTimeCoupling}
Observe that resonance can be understood as a deep interplay between the temporal and spatial frequencies, cf. \eqref{eq:solution-wave}. It manifests when a specific temporal frequency of the forcing term aligns with one of the resonance frequency $\sqrt{\mu_k}$ of the underlying operator. This synchronization triggers an amplification mechanism that is inherently space-time coupled, suggesting that the resonant growth of the solution cannot be completely factored into independent spatial and temporal components.
\end{remark}

\begin{remark}[Resonance-aware norms]
\label{rem:resonance-aware-norms}
In the functional setting considered so far, the trial norm is of Bochner type. As suggested by Figure \ref{fig:resonance-illustration}, this norm is able to distinguish between resonant and non-resonant solutions. Consequently, for the equivalence \eqref{eq:isomorphism} to hold, the data norm must also possess this capability, i.e. it must assign a larger weight to the source terms whose  frequency is close to the resonance frequency, cf. Figure~\ref{fig:amplification-constant}. However, as shown by the representation in \eqref{eq:NormRepresentation} the proposed data norm fails in this regard. The reason for this failure lies in the tensor-product structure of Bochner-type norms, which treats space and time as separate variables. This separation is in direct conflict with the inherently coupled nature of resonance described in Remark \ref{rem:SpaceTimeCoupling}. To correctly capture the frequency matching and restore the isomorphism, it is therefore necessary to introduce a  resonance-aware norm that explicitly accounts for this coupling. Finally, although our analysis is focused on the specific setting of Section \ref{SS:functional-setting}, we expect that this is a general phenomenon, that is the mechanism responsible for resonance is not fully captured whenever both trial and data spaces are equipped with standard, Bochner-type norms.  
\end{remark} 

\begin{remark}[Comparison with classical results]
\label{rem:comparison-with-classical-results}
In classical results \cite{Dautray.Lions:92, Evans:10, Ladyzhenskaya:85, Lions.Magenes:72}, existence and uniqueness of the solution of \eqref{eq:model-problem-weak} are guaranteed for source terms $f \in L^2(L^2(\Omega))$, together with the estimate
\begin{equation}
\label{eq:comparison-with-classical-results}
\|u\|_\trial \leq C \|f\|_{L^2(L^2)}
\end{equation}
where the constant $C$ depends on $T$ and $c$. As we have mentioned already in Remark~\ref{rem:data-space}, this estimate is not sharp, in the sense that it cannot be reversed. It is worth reviewing this observation in the light of the above discussion. The restriction of the $L^2(L^2)$-norm along the $k$-th component in the eigenfunction expansion \eqref{eq:NormRepresentation} equals $\|\cdot\|_{L^2(0,T)}$, i.e. it differs from the restriction of the $L^2(H^{-1})$-norm only by a factor $\lambda_k$, cf. \eqref{eq:spaces-expanded}. Such a factor corresponds to the worst case for the constant $C_{k,\omega}$ in Proposition~\ref{P:amplification-constant}, indicating that all frequencies $\omega$ in \eqref{eq:source-term-wave} are treated as resonance frequencies by the the $L^2(L^2)$-norm. Thus, the upper bound in \eqref{eq:comparison-with-classical-results} is sharp in some cases and provides a substantial overestimation in other ones.
\end{remark}

\begin{figure}[ht!]
	\centering
	\includegraphics[width=0.6\linewidth]{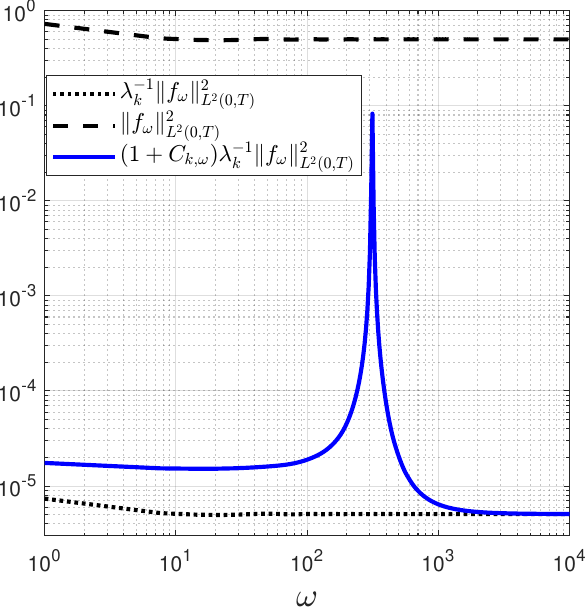}
	\caption{Comparing the data norm of $f_\omega$ from Proposition \ref{P:amplification-constant} with the $L^2(L^2)-$ and $L^2(H^{-1})-$norm, for $k=50$ and $\sqrt{\mu_k}=2\pi k$.}
	\label{fig:bounds-illustration}
\end{figure}

Figure~\ref{fig:bounds-illustration} summarizes the results and the discussion from this section in one plot. For the IVP \eqref{eq:model-problem-weak-reduced} with $k\geq 1$, the (restrictions of both the) $L^2(H^{-1})$- and the $L^2(L^2)$-norm are almost constant for all source terms in \eqref{eq:source-term-wave} and differ only by the multiplicative factor $\lambda_k$. None of these norms is suitable to establish the equivalence \eqref{eq:isomorphism}, because the trial norm of the corresponding solutions is by far not constant. In particular, the latter one approximately behaves as the $L^2(H^{-1})$-norm of the source term away from the resonance frequency $\sqrt{\mu_k}$ and it almost coincides with the $L^2(L^2)$-norm around $\sqrt{\mu_k}$. 

Moreover, the same behaviour arises for all intermediate Bochner-type $L^2(\mathbb H^s)$-norms, $s \in (0,1)$: along the $k$-th Fourier component such norms differ only by the multiplicative factor $\lambda_k^{\,s}$ (cf. \eqref{eq:NormRepresentation}), hence they are
essentially insensitive to the  frequency $\omega$. In particular, they
cannot reproduce the bell-shaped dependence of $\|u_{k,\omega}\|_{\trial_k}$ on
$\omega$, confirming that the standard Bochner scale for the data space is not flexible enough to
capture resonant effects.

\section{Looking for resonance-aware data norms}
\label{S:resonance-aware-norms} 
In this section we search for an alternative to the data norm $\|\cdot\|_{\data_k}$ from \eqref{eq:spaces-expanded} for the IVP \eqref{eq:model-problem-weak-reduced} with fixed $ k\geq 1$. We aim at establishing a counterpart of the equivalence \eqref{eq:isomorphism-k-mode} with constants independent of $\lambda_k$.

\begin{remark}[Restriction to the IVP]
	\label{rem:restriction-IVP}
In Section~\ref{S:Zank-counter-example}, restricting our attention from the IBVP \eqref{eq:model-problem} to the IVP \eqref{eq:model-problem-weak-reduced} is only instrumental to a more concise presentation of Zank's counter-example and it does not affect the generality of our main result, cf. Corollary~\ref{C:inf-sup-instability}. In contrast, the restriction to the IVP \eqref{eq:model-problem-weak-reduced} is a really simplifying assumption here. In fact, having removed the dependence on the space variable, we are not faced with delicate questions such as the characterization of the data space and the existence of a solution. Taking into account also that we deal with specific boundary conditions, see Remark~\ref{rem:BoundaryConditions}, our next results should be accounted only as a first step towards a suitable functional framework for the analysis of wave equations.
\end{remark}

Let us denote by $\|\cdot\|_{k,*}$ the desired norm on the data space $\data_k$. According to Corollary~\ref{C:inf-sup-instability} and Remarks~\ref{rem:resonance-aware-norms} and \ref{rem:comparison-with-classical-results}, this norm must be resonance-aware and it must satisfy the two-sided bound
\[
\frac{C_1}{\lambda_k} \|\cdot\|_{L^2(0,T)} \leq \|\cdot\|_{k, *} \leq C_2\|\cdot\|_{L^2(0,T)} 
\] 
for constants $0 < C_1 \leq C_2$ independent of $\lambda_k$.

Since the operator $\mathcal{L}_k$ in \eqref{eq:model-problem-weak-reduced} is invertible, we readily obtain the identity
\begin{equation}
\label{eq:optimal-data-norm}
\|u_k \|_{\trial_k} = \|\mathcal{L}_k^{-1} f_k\|_{\trial_k} 
\end{equation}
showing that the right-hand side is the optimal choice for the norm $\|\cdot\|_{k, *}$. Indeed, it makes $\mathcal{L}_k$ not only an isomorphism but even an isometry. This idea has been used in a number of recent references \cite{Bignardi.Moiola:25, Fuhrer.Gonzalez.Karkulik:25, Steinbach.Zank:22, Zank:20, Henning.Palitta.Simoncini.Urban:22}. Still, as we mentioned also in the introduction, we are interested in devising more explicit and operative equivalent definitions, that describe the nature of $\|\cdot\|_{k,*}$ without involving the operator $\mathcal{L}_k$. 

We present  two possible approaches for the construction $\|\cdot\|_{k,*}$  in the next subsections. In the first approach, we expand the source term $f$ into classical Fourier series,
we then define the norm in terms of the coefficients of this expansion, 
weighting more  the components with frequencies near the resonance frequency $\sqrt{\mu_k}$. The second approach, however, is 
based on a suitable energy balance law and an appropriate transform of 
the data, which depends on $\sqrt{\mu_k}$.

\subsection{First approach: Fourier expansion}
\label{SS:FourierExpansion}
As we observed in Section~\ref{SS:resonant-waves}, the source terms in \eqref{eq:source-term-wave} are not fully representative but, according to Fourier analysis, any admissible source term $f_k \in \data_k$ can be decomposed into an infinite combination of (some of) those ones. For instance, following \cite{Zank:20}, we have
\begin{equation}
	\label{eq:fourier-expansion}
	f_k(t) =\sum_{j=1}^{+\infty} f_{k,j}\cos(\omega_j t) 
\end{equation}
where the frequencies $(\omega_j)_{j=1}^{+\infty} \subseteq (0, +\infty)$ and the coefficients $(f_{k,j})_{j=1}^{+\infty} \subseteq \R$ are given by
\begin{equation}
\label{eq:fourier-expansion-coefficients}
	f_{k, j} := \int_0^T f_k(t)\cos(\omega_j t)\dt 
	\qquad \text{and} \qquad 
	\omega_j := \frac{1}{T}\left(\frac{\pi}{2}+j\pi\right)
\end{equation}
for $j \geq 1$. Of course, using a different Fourier basis would be possible, cf. Remark~\ref{rem:fourier-basis} below.
 
Combining the solution formula \eqref{eq:solution-wave} with the expansion above, we obtain an explicit expression for the desired data norm $\|\cdot\|_{k,*}$.
  
\begin{proposition}[Data norm by Fourier expansion]
	\label{P:data-norm-fourier-expansion}
	Let $u_{k} \in \trial_k$ and $f_k \in \data_k$ be the solution and the source term of the IVP \eqref{eq:model-problem-weak-reduced}. We have
    \begin{equation}
		\label{eq:data-norm-fourier-expansion}
		\|u_k\|^2_{\trial_k} = \sum_{j,\ell=1}^{+\infty} \mathcal{W}_{k}(\omega_j,\,\omega_\ell)\,f_{k,j} f_{k,\ell} 
	\end{equation}
    where $(0,\,+\infty)\times (0,\,+\infty)\ni (\omega,\,\widetilde \omega)\mapsto \mathcal{W}_{k}(\omega,\,\widetilde\omega)$ is the continuous map given by
    \begin{equation}
    \label{eq:data-norm-fourier-expansion-def}
        \begin{aligned}
            \mathcal{W}_{k}(\omega,\,\widetilde\omega) & := \frac{T}{2\lambda_k}\left(\sinc((\omega+\widetilde \omega)T)+\sinc((\omega-\widetilde \omega)T)\right) \\ & + \frac{c^2T}{(\mu_k - \omega^2)(\mu_k-\widetilde\omega^2)} \Bigg[ \mu_k(1+\sinc(2\sqrt{\mu_k}T)) \\ & + \frac{\omega^2+\widetilde\omega^2}{2}(\sinc((\omega-\widetilde\omega)T)+\sinc((\omega+\widetilde\omega)T))
            \\ & - \frac{\mu_k+\omega^2}{2}\left(\sinc((\sqrt{\mu_k} - \omega)T) + \sinc((\sqrt{\mu_k} + \omega)T)\right) \\ & -\frac{\mu_k+\widetilde\omega^2}{2}\left(\sinc((\sqrt{\mu_k} - \widetilde\omega)T)+\sinc((\sqrt{\mu_k} + \widetilde\omega)T)\right)\Bigg].
        \end{aligned}
    \end{equation}
    In particular, for the diagonal elements it holds that
    \begin{equation}
        \label{eq:data-norm-fourier-expansion-def-diag}
        \mathcal{W}_{k}(\omega_j,\,\omega_j) = (1+C_{k,\omega_j})\|f_{\omega_j}\|^2_{\data_k}
    \end{equation}
    for $j \geq 1$, where $f_{\omega_j}$ and $C_{k,\omega_j}$ are as in \eqref{eq:source-term-wave} and in Proposition \ref{P:amplification-constant}, respectively.
\end{proposition}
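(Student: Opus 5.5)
We follow the same strategy as in the proof of Proposition~\ref{P:amplification-constant}, now for a general source term given by the expansion \eqref{eq:fourier-expansion}. Inserting $\mathcal{L}_k u_k = f_k$ into \eqref{eq:norm-trial-exp}, i.e.\ writing $c^{-2}u_k'' = f_k - \lambda_k u_k$, and expanding the square yields
\begin{equation*}
\|u_k\|^2_{\trial_k} = \frac{1}{\lambda_k}\|f_k\|^2_{L^2(0,T)} + 2\lambda_k\|u_k\|^2_{L^2(0,T)} - 2(f_k,u_k)_{L^2(0,T)}.
\end{equation*}
By linearity of the Duhamel formula \eqref{eq:model-problem-waek-reduced-solution}, the solution is $u_k = \sum_j f_{k,j}\, u_{k,\omega_j}$, with $u_{k,\omega_j}$ given by \eqref{eq:solution-wave}. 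The series converges in $L^2(0,T)$ because $f_k \in L^2(0,T)$, $(\cos(\omega_j\cdot))_j$ is an orthogonal basis of $L^2(0,T)$, and the solution map $L^2\to L^2$ is bounded. Each of the three terms is therefore a bilinear form in the coefficients, and the right-hand side equals $\sum_{j,\ell} \mathcal{W}_k(\omega_j,\omega_\ell) f_{k,j}f_{k,\ell}$, where
\begin{equation*}
\mathcal{W}_k(\omega,\widetilde\omega) = \frac{1}{\lambda_k}(f_\omega,f_{\widetilde\omega}) + 2\lambda_k(u_{k,\omega},u_{k,\widetilde\omega}) - (f_\omega,u_{k,\widetilde\omega}) - (f_{\widetilde\omega},u_{k,\omega}),
\end{equation*}
all inner products being in $L^2(0,T)$.

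For the normalization we have to match the paper's convention. The coefficients in \eqref{eq:fourier-expansion-coefficients} lack the factor $2/T$, so either the intended identity involves the normalized basis or the factor is absorbed. We will fix the convention so that the diagonal term reproduces \eqref{eq:data-norm-fourier-expansion-def-diag}, which serves as a consistency check.

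Next we compute each inner product explicitly for $\omega,\widetilde\omega \neq \sqrt{\mu_k}$.
\begin{itemize}
\item The first term is immediate: $(f_\omega,f_{\widetilde\omega}) = \frac{T}{2}\big(\sinc((\omega+\widetilde\omega)T)+\sinc((\omega-\widetilde\omega)T)\big)$, which gives the first line of \eqref{eq:data-norm-fourier-expansion-def}.
\item For the remaining terms we rewrite $u_{k,\omega}(t) = \frac{c^2}{\mu_k-\omega^2}(\cos(\omega t)-\cos(\sqrt{\mu_k}t))$, which is the product-to-sum form of \eqref{eq:solution-wave}.
\item Then $(u_{k,\omega},u_{k,\widetilde\omega})$ and $(f_\omega,u_{k,\widetilde\omega})$ reduce to integrals of products of two cosines. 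Each equals $\frac T2$ times a sum of two sinc terms.
\end{itemize}
Collecting the terms over the common denominator $(\mu_k-\omega^2)(\mu_k-\widetilde\omega^2)$ and using $\mu_k = c^2\lambda_k$, the weights $\mu_k$, $(\omega^2+\widetilde\omega^2)/2$, $(\mu_k+\omega^2)/2$ and $(\mu_k+\widetilde\omega^2)/2$ should appear as stated. For example, the cross term $(f_\omega,u_{k,\widetilde\omega})$ carries the factor $(\mu_k-\omega^2)$ in the numerator. Combined with $2\lambda_k c^4 = 2c^2\mu_k$ from the middle term, this should produce those weights.

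The main obstacle is this algebraic bookkeeping, together with continuity of $\mathcal{W}_k$ at the removable singularities $\omega = \sqrt{\mu_k}$ or $\widetilde\omega = \sqrt{\mu_k}$. For continuity we argue as in Proposition~\ref{P:frequency-dependence}. The map $\omega \mapsto u_{k,\omega}$ is continuous into $L^2(0,T)$, since the Duhamel integrand depends continuously on $\omega$, and the resonant case in \eqref{eq:solution-wave} is its limit. Since $\mathcal{W}_k$ is built from inner products of continuous $L^2$-valued maps, it is continuous, and the explicit formula extends by continuity; this avoids any Taylor expansion.

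Finally, the diagonal identity \eqref{eq:data-norm-fourier-expansion-def-diag} follows because $\mathcal{W}_k(\omega,\omega) = \|\mathcal{L}_k^{-1}f_\omega\|^2_{\trial_k} = \|u_{k,\omega}\|^2_{\trial_k}$ by construction. Proposition~\ref{P:amplification-constant} then gives $(1+C_{k,\omega})\|f_\omega\|^2_{\data_k}$.
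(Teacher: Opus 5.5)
Your argument is correct, and its core is the paper's: the identity \eqref{eq:data-norm-fourier-expansion-proof}, the expansion of $f_k$ and $u_k$ by linearity, and the evaluation of the three $L^2$ inner products as integrals of products of cosines. The bookkeeping you leave as ``should appear'' does close. Set $D_\omega:=\mu_k-\omega^2$ and factor $c^2/(D_\omega D_{\widetilde\omega})$ out of the three terms other than $\frac{1}{\lambda_k}(f_\omega,f_{\widetilde\omega})$. The coefficient of $(\cos(\omega\cdot),\cos(\widetilde\omega\cdot))_{L^2(0,T)}$ is then $2\mu_k-D_\omega-D_{\widetilde\omega}=\omega^2+\widetilde\omega^2$. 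The coefficient of $(\cos(\omega\cdot),\cos(\sqrt{\mu_k}\cdot))_{L^2(0,T)}$ is $D_\omega-2\mu_k=-(\mu_k+\omega^2)$, and symmetrically for $\widetilde\omega$. The coefficient of $\|\cos(\sqrt{\mu_k}\cdot)\|^2_{L^2(0,T)}$ is $2\mu_k$. This is exactly \eqref{eq:data-norm-fourier-expansion-def}.

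Where you genuinely depart from the paper is in the continuity claim and the diagonal identity. The paper gets continuity from Taylor expansions at $\omega=\sqrt{\mu_k}$, as in Proposition~\ref{P:frequency-dependence}, backed by symbolic computation, and leaves the diagonal identity implicit. You instead write $\mathcal{W}_k(\omega,\widetilde\omega)$ through inner products of the $L^2$-continuous maps $\omega\mapsto f_\omega$ and $\omega\mapsto u_{k,\omega}$. By polarization of \eqref{eq:data-norm-fourier-expansion-proof} it is simply $(u_{k,\omega},u_{k,\widetilde\omega})_{\trial_k}$. Hence the singularities at $\omega=\sqrt{\mu_k}$ or $\widetilde\omega=\sqrt{\mu_k}$ are automatically removable, and the diagonal identity follows at once from Proposition~\ref{P:amplification-constant}. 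This is shorter and avoids all expansions. The paper's route additionally yields the explicit value of $\mathcal{W}_k$ at resonance, which the statement does not need.

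Two points about the setting need correcting. Your reading of the normalization is the right one. Since $\|\cos(\omega_j\cdot)\|^2_{L^2(0,T)}=T/2$, the stated $\mathcal{W}_k$ is the kernel relative to the coefficients $\frac{2}{T}\int_0^T f_k(t)\cos(\omega_j t)\,\mathrm{d}t$. With the literal definition \eqref{eq:fourier-expansion-coefficients}, the right-hand side of \eqref{eq:data-norm-fourier-expansion} is off by a factor $T^2/4$.

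More importantly, your assertion that $(\cos(\omega_j\cdot))_{j\geq 1}$ is an orthogonal basis of $L^2(0,T)$ is false. The complete family of eigenfunctions of $-\mathrm{d}^2/\mathrm{d}t^2$ with $y'(0)=0=y(T)$ is $(\cos(\omega_j\cdot))_{j\geq 0}$, and the missing mode $\omega_0=\pi/(2T)$ is orthogonal to all the others. For $f_k=\cos(\pi t/(2T))$, every coefficient with $j\geq 1$ vanishes while $u_k\neq 0$, so \eqref{eq:data-norm-fourier-expansion} fails as written. Both defects are inherited from the statement rather than from your argument. You should nonetheless say explicitly that you prove the corrected version, with sums over $j,\ell\geq 0$ and normalized coefficients.
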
  
  
\begin{proof} Arguing as in the proof of Proposition~\ref{P:amplification-constant}, we infer the identity
\begin{equation}
	\label{eq:data-norm-fourier-expansion-proof}
\|u_{k}\|^2_{\trial_k} 
=
\frac{1}{\lambda_k} \|f_k\|^2_{L^2(0,T)} + 2\lambda_k \|u_{k}\|^2_{L^2(0,T)} - 2 (f_k, u_{k})_{L^2(0,T)}. 
\end{equation}
Then, we compute each term on the right-hand side with the help of \eqref{eq:solution-wave} and \eqref{eq:fourier-expansion}. The continuity of $(\omega,\widetilde\omega)\mapsto\mathcal W_k(\omega,\widetilde\omega)$ can be established by the same reasoning as in the proof of Proposition \ref{P:frequency-dependence}.
 We omit further details and refer to the source code at our public repository (see the url address on page \pageref{url}) for a verification by symbolic computations.
\end{proof} 

The Fourier expansion \eqref{eq:fourier-expansion} induces an isometry between $L^2(0,T)$ and the space $\ell^2$ of square-summable sequences. The right-hand side of \eqref{eq:data-norm-fourier-expansion} provides an expression for the data norm $\|\cdot\|_{k,*}$, representing it through a positive-definite quadratic form $\mathcal{Q}$ on $\ell^2$. Figure~\ref{fig:quadratic-form-coefficients} provides an intuition about the size of the first coefficients of $\mathcal{Q}$. Note, in particular, that the diagonal of the diagram exhibits the same bell-shaped profile from Figures~\ref{fig:amplification-constant} and \ref{fig:bounds-illustration}, cf. \eqref{eq:data-norm-fourier-expansion-def-diag}.

\begin{figure}[ht!]
	\centering
	\includegraphics[width=0.6\linewidth]{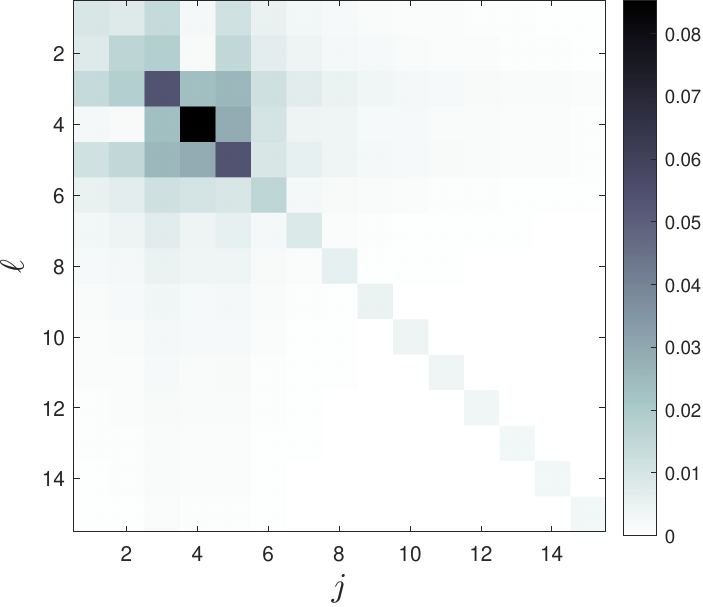}
	\caption{Absolute value of the first coefficients of $\mathcal{W}_{k}(\omega_j,\,\omega_\ell)$ from \eqref{eq:data-norm-fourier-expansion}, with  $\sqrt{\mu_k} =200$, and $T=1$, $c=1$.}
	\label{fig:quadratic-form-coefficients}
\end{figure}

In order to simplify the expression of the data norm, it looks reasonable asking if the form $\mathcal{Q}$ is equivalent to a diagonal quadratic form.  For this purpose, the most immediate candidate is the diagonal part of $\mathcal{Q}$, namely 
\begin{equation*}
\sum_{j = 1}^{+\infty} 
\mathcal{W}_{k}(\omega_j,\,\omega_j) f_{k,j}^2.
\end{equation*}
Unfortunately, we verified by symbolic computations that such an equivalence does not hold true. In particular, the form $\mathcal{Q}$ is not diagonally dominant.

\begin{remark}[Fourier basis]
\label{rem:fourier-basis}	
The expression of the coefficients $\mathcal{W}_{k}(\omega_j,\,\omega_\ell)$ in \eqref{eq:data-norm-fourier-expansion-def} and the discussion following Proposition~\ref{P:data-norm-fourier-expansion} critically depend on the specific Fourier basis at hand. We considered some other options, but none of the resulting quadratic forms appears to be equivalent to its diagonal part. Of course, a suitable basis for this purpose could be found by diagonalization, but it is our impression that the necessary change of basis is far from straight-forward.
\end{remark}

\subsection{Second  approach: energy balance law}  
\label{SS:energy-balance-law}
The starting point of our second approach is a point-wise in time energy balance law for the IVP \eqref{eq:model-problem-weak-reduced}. This law relates the total energy, obtained as the sum of potential and kinetic energy, to suitable transformations  of the source term. 

\begin{lemma}[Energy balance]
\label{L:energy-balance}
Let $u_{k} \in \trial_k$ and $f_k \in \data_k$ be the solution and the source term of the IVP \eqref{eq:model-problem-weak-reduced}. We have
\begin{equation}
\label{eq:energy-balance}
\lambda_k u_k(t)^2 + \frac{1}{c^2}u_k'(t)^2  = c^2\Big (\fcos(t)^2 + \fsin(t)^2 \Big ), \quad \text{a.e. in $(0,T)$}
\end{equation}
where $\fcos,\,\fsin \in H^1(0,T)$ are defined as
\begin{equation}
\label{eq:fcos-fsin}
\fcos(t)  := \int_0^t f_k(s)\cos{(\sqrt{\mu_k} s)} \mathrm{d}s  
\quad \text{and} \quad 
\fsin(t)  := \int_0^t f(s)\sin{(\sqrt{\mu_k} s)}\mathrm{d}s.
\end{equation}
\end{lemma}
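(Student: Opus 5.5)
We plan to start from the Duhamel representation \eqref{eq:model-problem-waek-reduced-solution} and expand the sine of a difference. Writing $\sqrt{\mu_k}$ for the resonance frequency, we have $\sin(\sqrt{\mu_k}(t-s)) = \sin(\sqrt{\mu_k}t)\cos(\sqrt{\mu_k}s) - \cos(\sqrt{\mu_k}t)\sin(\sqrt{\mu_k}s)$. Pulling the $t$-dependent factors out of the integral gives
\begin{equation*}
u_k(t) = \frac{c^2}{\sqrt{\mu_k}}\Big(\sin(\sqrt{\mu_k}t)\,\fcos(t) - \cos(\sqrt{\mu_k}t)\,\fsin(t)\Big).
\end{equation*}
Since $f_k\in L^2(0,T)$, the functions $\fcos$ and $\fsin$ are absolutely continuous, in fact in $H^1(0,T)$, with $(\fcos)' = f_k\cos(\sqrt{\mu_k}\,\cdot)$ and $(\fsin)' = f_k\sin(\sqrt{\mu_k}\,\cdot)$ almost everywhere. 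This settles the regularity claim.

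Next we differentiate by the product rule, which is valid for $H^1$ functions. The terms containing $f_k$ are
\begin{equation*}
\frac{c^2}{\sqrt{\mu_k}}f_k(t)\big(\sin(\sqrt{\mu_k}t)\cos(\sqrt{\mu_k}t)-\cos(\sqrt{\mu_k}t)\sin(\sqrt{\mu_k}t)\big) = 0,
\end{equation*}
so they cancel. This leaves
\begin{equation*}
u_k'(t) = c^2\big(\cos(\sqrt{\mu_k}t)\,\fcos(t) + \sin(\sqrt{\mu_k}t)\,\fsin(t)\big).
\end{equation*}
This cancellation is the only delicate point. It is also why the identity holds pointwise for every $t$ (hence a.e.) and not merely in a weak sense. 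We should also check that it is consistent with the initial conditions $u_k(0)=u_k'(0)=0$, which is immediate since $\fcos(0)=\fsin(0)=0$.

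Finally we compute the energy. Using \eqref{eq:mu-k}, we have $\lambda_k = \mu_k/c^2$. Hence
\begin{equation*}
\lambda_k u_k^2 = \frac{\mu_k}{c^2}\cdot\frac{c^4}{\mu_k}(\ldots)^2 = c^2\big(\sin\,\fcos - \cos\,\fsin\big)^2
\end{equation*}
and
\begin{equation*}
\frac{1}{c^2}(u_k')^2 = c^2\big(\cos\,\fcos + \sin\,\fsin\big)^2.
\end{equation*}
We add the two expressions. The cross terms $\mp 2\sin\cos\,\fcos\fsin$ cancel, and $\sin^2+\cos^2=1$. This yields \eqref{eq:energy-balance}.

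The main obstacle is merely bookkeeping of constants, namely the factor $c^2/\sqrt{\mu_k}$ against $\lambda_k=\mu_k/c^2$. One should also note that the statement writes $f$ in the definition of $\fsin$, which is to be read as $f_k$.
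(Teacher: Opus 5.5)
Your proof is correct and follows essentially the same route as the paper. Expanding $\sin(\sqrt{\mu_k}(t-s))$ in the Duhamel formula yields exactly the paper's identity expressing $\frac{1}{c^2}(\sqrt{\mu_k}u_k(t), u_k'(t))$ as the orthogonal matrix $R_k(t)$ applied to $(\fcos(t),\fsin(t))$, and your cancellation of the cross terms via $\sin^2+\cos^2=1$ is the orthogonality of $R_k(t)$ written out; the only addition is that you spell out the cancellation of the $f_k$-terms in $u_k'$, which the paper leaves to ``elementary trigonometric manipulations''.
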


\begin{proof}
Combining the solution formula \eqref{eq:solution-wave} with elementary trigonometric manipulations, we infer the vector identity
\begin{equation*}
	\frac{1}{c^2}
	\begin{pmatrix}
		\sqrt{\mu_k}u_k(t)\\[3pt]
		u_k'(t)
	\end{pmatrix}
	=
	\underbrace{\begin{pmatrix}
		\sin(\sqrt{\mu_k}t) & -\cos(\sqrt{\mu_k}t)\\[3pt]
		\cos(\sqrt{\mu_k}t) &  \sin(\sqrt{\mu_k}t)
	\end{pmatrix}}_{=: R_k(t)}
	\begin{pmatrix}
		\fcos(t)\\[3pt]
		\fsin(t)
	\end{pmatrix}, 
	\quad \text{a.e. in $(0,T)$}.
\end{equation*}
Since $R_k(t)$ is an orthogonal matrix, we obtain the claimed energy balance law by taking the squared Euclidean norm on both sides and recalling \eqref{eq:mu-k}.
\end{proof}

With the above balance law at hand, we are in position to provide an alternative to Proposition~\ref{P:data-norm-fourier-expansion} for the representation of the data norm $\|\cdot\|_{k,*}$.

\begin{proposition}[Data norm by energy balance]
	\label{P:data-norm-energy-balance}
	Let $u_{k} \in \trial_k$ and $f_k \in \data_k$ be the solution and the source term of the IVP \eqref{eq:model-problem-weak-reduced}. We have
	\begin{equation}
		\label{eq:data-norm-energy-balance}
		\|u_k\|^2_{\trial_k} + \frac{1}{c^2} \|u_k'\|^2_{L^2(0,T)} 
		\eqsim 
		\frac{1}{\lambda_k}\|f_k\|^2_{L^2(0,T)} +
		c^2\Big (\|\fcos\|^2_{L^2(0,T)} + \|\fsin\|^2_{L^2(0,T)} \Big )
	\end{equation}
	with hidden constants independent of $\lambda_k$, $c$ and $T$. 
\end{proposition}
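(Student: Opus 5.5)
The plan is to integrate the pointwise energy balance of Lemma~\ref{L:energy-balance} over $(0,T)$ and combine it with the identity already used for Proposition~\ref{P:amplification-constant}. Write $A := \|u_k\|^2_{\trial_k} + c^{-2}\|u_k'\|^2_{L^2(0,T)}$ for the left-hand side and $B := \lambda_k^{-1}\|f_k\|^2_{L^2(0,T)} + c^2(\|\fcos\|^2_{L^2(0,T)} + \|\fsin\|^2_{L^2(0,T)})$ for the right-hand side. Integrating \eqref{eq:energy-balance} in time gives the exact identity
\begin{equation*}
\lambda_k \|u_k\|^2_{L^2(0,T)} + \frac{1}{c^2}\|u_k'\|^2_{L^2(0,T)} = c^2\Big(\|\fcos\|^2_{L^2(0,T)} + \|\fsin\|^2_{L^2(0,T)}\Big) =: E.
\end{equation*}
Using \eqref{eq:norm-trial-exp}, we have $A = E + \frac{1}{c^4\lambda_k}\|u_k''\|^2_{L^2(0,T)}$ and $B = \frac{1}{\lambda_k}\|f_k\|^2_{L^2(0,T)} + E$. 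The whole question therefore reduces to comparing $\frac{1}{c^4\lambda_k}\|u_k''\|^2$ with $\frac{1}{\lambda_k}\|f_k\|^2$, each modulo $E$.

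Both directions follow from the equation \eqref{eq:model-problem-weak-reduced} together with the triangle inequality. Since $c^{-2}u_k'' = f_k - \lambda_k u_k$, we get
\begin{equation*}
\frac{1}{c^4\lambda_k}\|u_k''\|^2_{L^2(0,T)} \le \frac{2}{\lambda_k}\|f_k\|^2_{L^2(0,T)} + 2\lambda_k\|u_k\|^2_{L^2(0,T)} \le \frac{2}{\lambda_k}\|f_k\|^2_{L^2(0,T)} + 2E,
\end{equation*}
which yields $A \le 3B$. In the other direction, $f_k = c^{-2}u_k'' + \lambda_k u_k$ gives
\begin{equation*}
\frac{1}{\lambda_k}\|f_k\|^2_{L^2(0,T)} \le \frac{2}{c^4\lambda_k}\|u_k''\|^2_{L^2(0,T)} + 2\lambda_k\|u_k\|^2_{L^2(0,T)} \le 2\,\frac{1}{c^4\lambda_k}\|u_k''\|^2_{L^2(0,T)} + 2E,
\end{equation*}
so $B \le 3A$. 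These constants are absolute, hence independent of $\lambda_k$, $c$ and $T$, as claimed.

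The only delicate point is to justify integrating \eqref{eq:energy-balance}. The lemma holds for a.e.\ $t$. Its right-hand side lies in $L^1$ because $\fcos,\fsin\in H^1(0,T)$, and its left-hand side lies in $L^1$ because $u_k\in H^2(0,T)$. The term-by-term integration is therefore legitimate, so the step is essentially bookkeeping and no resonance-specific estimate is needed.
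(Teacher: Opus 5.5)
Your proof is correct and follows the paper's route. You integrate the energy balance of Lemma~\ref{L:energy-balance} over $(0,T)$, then use $c^{-2}u_k'' = f_k - \lambda_k u_k$ (which the paper packages as identity \eqref{eq:data-norm-fourier-expansion-proof}) with elementary Young-type inequalities to trade $\frac{1}{c^4\lambda_k}\|u_k''\|^2_{L^2(0,T)}$ for $\frac{1}{\lambda_k}\|f_k\|^2_{L^2(0,T)}$ up to the energy term; your explicit bounds $A\le 3B$ and $B\le 3A$ are valid with absolute constants.
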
  

\begin{proof}
Integrating the point-wise energy balance \eqref{eq:energy-balance} over $(0,T)$, we obtain	
\[
\frac{\mu_k}{c^2}\|u_k\|^2_{L^2(0,T)} + \frac{1}{c^2}\|u_k'\|^2_{L^2(0,T)} 
=
c^2\Big (\|\fcos\|^2_{L^2(0,T)} + \|\fsin\|^2_{L^2(0,T)} \Big ).
\]
The combination of this identity with \eqref{eq:mu-k}, \eqref{eq:data-norm-fourier-expansion-proof} and simple Young's inequalities provides the claimed equivalence.
\end{proof}

According to Remark~\ref{rem:embedding-theorem} and \eqref{eq:NormRepresentation}, the norm on the left-hand side of \eqref{eq:data-norm-energy-balance} is equivalent to $\|\cdot\|_{\trial_k}$. The data norm on the right-hand side is resonance-aware in the sense of Remark~\ref{rem:resonance-aware-norms}, because the functions $\fcos$ and $\fsin$ incorporate the resonance frequency $\sqrt{\mu_k}$. A disadvantage of this representation of the data norm is that we could not find a simple expression for the for the dual norm, i.e. for the test norm to be used in place of $\|\cdot\|_{\test_k}$ from \eqref{eq:spaces-expanded}. An advantage is that we can easily incorporate nonzero initial values as follows.

\begin{remark}[Nonzero initial values]
\label{rem:nonzero-initial-values}
Assume that we have nonzero initial values 
\begin{equation*}
	u_k(0) = g_k \qquad \text{and} \qquad u_k'(0) = h_k
\end{equation*}
in the IVP \eqref{eq:model-problem-weak-reduced}. Then, the solution verifies the relation
\begin{equation*}
	\frac{1}{c^2}\begin{pmatrix}
		\sqrt{\mu_k}u_k(t) \\[3pt] 
		u_k'(t)
	\end{pmatrix} = \begin{pmatrix}
		\sin{(\sqrt{\mu_k}t)} & -\cos{(\sqrt{\mu_k}t)} \\[3pt] 
		\cos{(\sqrt{\mu_k}t)} &\sin{(\sqrt{\mu_k}t)}
	\end{pmatrix}\begin{pmatrix}
		\fcos(t) +\frac{1}{c^2}h_k \\[3pt] 
		\fsin(t)-\frac{\sqrt{\mu_k }}{c^2}g_k
	\end{pmatrix}, \quad \text{a.e. in $(0,T)$}
\end{equation*}
giving rise to the point-wise energy balance
\begin{equation*}
	\lambda_k u_k(t)^2 + \frac{1}{c^2}u_k'(t)^2  = c^2\Big (\Big(\fcos(t)+\frac{1}{c^2}h_k\Big)^2 + \Big( \fsin(t)-\frac{\sqrt{\mu_k}}{c^2} g_k \Big)^2 \Big ), \quad \text{a.e. in $(0,T)$}
\end{equation*}
and to the norm equivalence
\begin{equation*}
	\begin{split}
		&\|u_k\|^2_{\trial_k} + \frac{1}{c^2} \|u_k'\|^2_{L^2(0,T)} 
	\eqsim \\
	& \qquad  \frac{1}{\lambda_k}\|f_k\|^2_{L^2(0,T)} +
	c^2\Big (\left\|\fcos(t)+\frac{1}{c^2}h_k\right\|^2_{L^2(0,T)} + \left \| \fsin(t)-\frac{\sqrt{\mu_k}}{c^2} g_k\right \|^2_{L^2(0,T)} \Big ).
	\end{split}
\end{equation*}
Note that, in case $f=0$, we recover the well-known energy conservation law for the acoustic wave equation, cf. \cite[Chapter XV, Section $4$, Remark 5]{Dautray.Lions:92}.
\end{remark}

\subsection{Damped wave equation}
\label{SS:damping}
We now investigate how the previous analysis extends to the case in which damping effects are taken into account.
For this purpose, we consider the IBVP 
\begin{equation*}
	\begin{aligned}
		\begin{aligned}
			\partial_{tt} u +\rho\,\partial_t u - \Delta u & = f && \text{in } \Omega\times (0,T) \\
			u & = 0 && \text{in } \partial\Omega \times (0,T) \\
			u(\cdot, 0) & = 0 && \text{in } \Omega \\
			u_t(\cdot, 0) & = 0 && \text{in } \Omega
		\end{aligned}
	\end{aligned}
\end{equation*}
with $\rho>0$. 

Using the functional setting from Section~\ref{SS:functional-setting}, the eigenfunction expansion from Section~\ref{SS:EigenfunctionsExpansion} leads to the following IVP 
\begin{equation*}
	u_k'' + \rho\,u_k' +\lambda_k u_k = f_k
\end{equation*}
for $k \geq 1$. The corresponding solution is given by
\begin{align*}
	\begin{aligned}
		\displaystyle u_k(t) = \frac{2}{\sqrt{\rho^2-4\lambda_k}}\int_0^t \sinh\left(\frac{\sqrt{\rho^2-4\lambda_k}}{2}(t-s)\right) e^{\frac{\rho}{2}(s-t)}f_k(s)\mathrm{d}s && \text{if } 4\lambda_k< \rho^2, \\
		\displaystyle u_k(t) = \frac{2}{\sqrt{4\lambda_k-\rho^2}}\int_0^t \sin\left(\frac{\sqrt{4\lambda_k-\rho^2}}{2}(t-s)\right) e^{\frac{\rho}{2}(s-t)}f_k(s)\mathrm{d}s &&  \text{if } 4\lambda_k> \rho^2. 
	\end{aligned}
\end{align*}
The specific expression of the solution when $4\lambda_k = \rho^2$ is irrelevant for our discussion.

For $4\lambda_k < \rho^2$ there is no resonant solution and we could proceed in analogy with the heat equation, cf. Section~\ref{SS:Heat equation}. Still, for $4\lambda_k > \rho^2$, the source term
\[
f_k(t) = e^{\frac{\rho}{2}(T-t)}\sin\left(\frac{\sqrt{4\lambda_k-\rho^2}}{2}t\right)
\]
gives rise to a resonant solution. In view of \eqref{eq:eigenvalues}, this case eventually comes into place, irrespective of $\rho$, indicating that a counterpart of Corollary~\ref{C:inf-sup-instability} holds true.  

Upon setting
\[
\eta_k := \frac{4\lambda_k - \rho^2}{4} > 0 
\qquad \text{and} \qquad
\widetilde{f}_k(t) := f_k(t)e^{\frac{\rho}{2} t}, 
\]
a candidate resonance-aware data norm can be inferred by arguing, e.g., as in Section~\ref{SS:energy-balance-law}. Indeed, the solution verifies the relation
\[
\begin{pmatrix}
	\sqrt{\eta_k} u_k(t) \\[3pt] 
	u_k'(t)
\end{pmatrix} 
= 
\begin{pmatrix}
	\sin(\sqrt{\eta_k} t) & -\cos(\sqrt{\eta_k} t) \\[3pt] 
	\cos(\sqrt{\eta_k} t) & \sin(\sqrt{\eta_k} t)
\end{pmatrix}
\begin{pmatrix}
	e^{-\frac{\rho}{2} t}\ftcos(t) \\[3pt]
	e^{-\frac{\rho}{2} t} \ftsin(t)
\end{pmatrix}, \quad \text{a.e. in $(0,T)$}
\]
with the notation from \eqref{eq:fcos-fsin}. Thus, we obtain the point-wise energy balance 
\[
\eta_k u_k^2(t) + u_k'(t)^2 = e^{-\rho t}\left( \ftcos(t)^2+ \ftsin(t)^2\right), \quad \text{a.e. in $(0,T)$}
\]
and a corresponding counterpart of Proposition~\ref{P:data-norm-energy-balance}.

\section{Conclusion}
Our analysis points out a distinguishing property of the linear wave equation: the existence of resonant solutions prevents from establishing an isomorphism when both the trial and the data space in a weak formulation are equipped with classical Bochner-type norms. Indeed, resonant solutions hinge on nontrivial compensations between the different components of the differential operator, cf. Remark~\ref{rem:OperatorComponents}.

From a complementary viewpoint, resonance can be interpreted as a coupling phenomenon between temporal and spatial frequencies: it occurs when a  frequency of the forcing term matches with one of the spatial eigenfrequencies of the underlying operator. This matching triggers an amplification mechanism that is inherently space–time coupled. A Bochner-type norm treats the two variables separately, due to its tensor-product structure. Therefore, it cannot detect such frequency matching. As a consequence, using Bochner-type norms for both the trial and data spaces does not fully capture the mechanism responsible for resonance, highlighting the need for a norm that explicitly accounts for this coupling, cf. Remark \ref{rem:SpaceTimeCoupling} - \ref{rem:resonance-aware-norms}.

To address this, in a simplified setting we devised two approaches to construct resonance-aware norms. While the resulting norms may not be immediately convenient for the analysis of discretization methods, they highlight a fundamental aspect of the problem: resonances are intrinsic to the equation and must be explicitly taken into account in any rigorous analytical framework. This insight provides a clearer understanding of the structure of the wave equations and sets the stage for future work on a functional setting and numerical methods that take into account these resonant effects.

\subsection*{Acknowledgments} {The authors are members of the GNCS-INdAM. }

\appendix

\section{Extension to other IBVPs}
\label{A:extension-other-IBVPs}
In this appendix, we briefly discuss how the previous analysis can be adapted to other initial-boundary value problems.

\subsection{Heat equation}
\label{SS:Heat equation}
First of all, it is worth comparing with the following IBVP
\begin{equation*}
	\label{eq:heat}
	\begin{aligned}
		\begin{aligned}
			\partial_t u - \Delta u & = f && \text{in } \Omega \times (0,T), \\
			u & = 0 && \text{in } \partial\Omega \times (0,T), \\
			u(\cdot,0) & = 0 && \text{in } \Omega.
		\end{aligned}
	\end{aligned}
\end{equation*}
for the heat equation. Indeed, upon setting
\begin{equation*}
\trial := \{\widetilde{u} \in L^2(H^1_0(\Omega)) \cap H^1(H^{-1}(\Omega)) \mid \widetilde u(0) = 0\}
\qquad \text{and} \qquad
\test := L^2(H^1_0(\Omega))
\end{equation*}
well-posedness and \eqref{eq:isomorphism} are established for the following weak formulation:
Find $u \in \trial$ such that
\begin{equation}
	\label{eq:heat-weak}
	\mathcal{L} u := \partial_{t} u - \Delta u = f \quad \text{in $\data$}
\end{equation}
see, e.g., \cite[Section 65]{Ern.Guermond:21-III}

In contrast the acoustic wave equation, it is interesting that  \eqref{eq:isomorphism} can be obtained here using standard Bochner-like spaces for both $\trial$ and $\test$, cf. Remarks~\ref{rem:resonance-aware-norms} and~\ref{rem:comparison-with-classical-results}. The difference can be explained by noticing that, by an eigenfunction expansion as in Section~\ref{SS:EigenfunctionsExpansion}, we obtain the first-order IVP
\begin{equation}
	\label{eq:heat-weak-reduced}
	u_k' +\lambda_k u_k = f_k
\end{equation}
for $k\geq 1$, whose solution
\begin{equation}
	\label{eq:heat-waek-reduced-solution}
	u_k(t) = e^{-\lambda_k t}\int_0^t f_k(s)e^{\lambda_k s}\mathrm{d}s.
\end{equation}
is in no case resonant. Thus, it is not a surprise that both the approaches discussed in Sections~\ref{SS:FourierExpansion} and~\ref{SS:energy-balance-law} to determine a suitable data norm for the equivalence \eqref{eq:isomorphism} simply recover the norm $\|\cdot\|^2_{\data_k}$ from \eqref{eq:spaces-expanded}.

The Fourier expansion \eqref{eq:fourier-expansion}-\eqref{eq:fourier-expansion-coefficients}, combined with \eqref{eq:heat-waek-reduced-solution}, leads to the identity
\begin{equation*}
	\label{eq:heat-fourier-expansion}
	\frac{1}{\lambda_k}\|u_k'\|^2_{L^2(0,T)} +  \lambda_k \|u_k\|^2_{L^2(0,T)} = \sum_{j=1}^{+\infty} \frac{T}{2\lambda_k} f_{k,j}^2.  
\end{equation*}
The quadratic form on the right-hand side is diagonal, unlike \eqref{eq:data-norm-fourier-expansion}, with coefficients independent of $j$. By Parseval's identity, the right-hand side is nothing but $\lambda_k^{-1}\|f_k\|^2_{L^2(0,T)}$. Therefore, using the Bochner–norm representation \eqref{eq:NormRepresentation}, 
identity \eqref{eq:heat-fourier-expansion} directly yields $\|u\|_\trial = \|f\|_\data$, that is, the operator $\mathcal L = \partial_t + \Delta$ is an isometry from 
$\trial$ onto $\data$.

Replacing \eqref{eq:fcos-fsin} by
\begin{equation*}
	\label{eq:f-exp}
	\fexp(t) := \int_0^t f_k(s)e^{\lambda_k s}\mathrm{d}s,
\end{equation*}
we obtain the point-wise identity
\begin{equation}
	\label{eq:heat-energy-balance-pointwise}
\lambda_k u_k(t)^2 
= 
\lambda_k e^{-2\lambda_k t}\fexp(t)^2, \quad \text{a.e. in $(0,T)$.}
\end{equation}
Unlike \eqref{eq:energy-balance}, the right-hand side is bounded as follows (see below)
\begin{align}
\label{eq:heat-critical-estimate}
	\lambda_k\int_0^Te^{-2\lambda_k t} \fexp(t)^2\dt \le 
	\frac{1}{\lambda_k}\int_0^Tf_k(t)^2\dt
\end{align}
showing that the combination of \eqref{eq:heat-weak-reduced} with \eqref{eq:heat-energy-balance-pointwise} eventually provides the equivalence
\begin{equation*}
\frac{1}{\lambda_k} \|u_k\|^2_{L^2(0,T)} + \lambda_k \|u_k\|^2_{L^2(0,T)} \eqsim
\frac{1}{\lambda_k} \|f_k\|^2_{L^2(0,T)}	
\end{equation*}
with hidden constants independent of $\lambda_k$. Thus, also the the energy balance approach eventually leads to the equivalence $\|u\|_\trial \eqsim \|f\|_\data$. 

\begin{proof}[Proof of \eqref{eq:heat-critical-estimate}]
Note that $\fexp \in H^1(0,T)$ with $\fexp(0) = 0$. Thus, we have
\begin{equation*}
\begin{split}
\lambda_k\int_0^Te^{-2\lambda_k t} \fexp(t)^2 
&= 
2 \lambda_k \int_0^T \fexp(t) (\fexp(t))'\left( \int_t^T e^{-2\lambda_k s} \mathrm{d}s\right)\dt \\
&\leq
\int_0^T |\fexp(t) (\fexp(t))'| e^{-2\lambda_k t}\dt\\
&=
\int_0^T |\fexp(t) f_k(t)| e^{-\lambda_k t}\dt
\end{split}
\end{equation*}
where the last identity follows from the relation $(\fexp(t))' = f_k(t) e^{\lambda_k t}$ a.e. in $(0,T)$. We conclude by invoking the Cauchy-Schwartz inequality.\end{proof}

\subsection{Schrödinger Equation} 
The last example is intended to point out that our results are not restricted to the acoustic wave equation. For this purpose, we consider the IBVP
\begin{equation*}
	\begin{aligned}
		\begin{aligned}
			i\partial_t u - \Delta u & = f && \text{in } \Omega \times (0,T) \\
			u & = 0 && \text{in } \partial\Omega \times (0,T) \\
			u(\cdot, 0) & = 0 && \text{in } \Omega. 
		\end{aligned}
	\end{aligned}
\end{equation*}
By applying the eigenfunction expansion of Section~\ref{SS:EigenfunctionsExpansion}, we are led to an IVP involving the first-order ordinary differential equation
\[
	i u_k' + \lambda_k u = f_k
\]
for $k \geq 1$. Due to the imaginary unit, this problem is much closer to the one arising from the acoustic wave equation \eqref{eq:model-problem-weak-reduced} rather than to the one obtained by the heat equation \eqref{eq:heat-weak-reduced}, because certain source terms give rise to a resonant solution.

Once more, it can be expected that well-posedness \eqref{eq:isomorphism} cannot be established with Bochner-like spaces for both solutions and data. In order to find out a resonance-aware data norm in the spirit of Section~\ref{SS:energy-balance-law}, we observe that the solution can be represented as follows
\begin{align*}
	\Re u_k(t) &= \int_0^t \Re f_k(s)\sin(\sqrt{\lambda_k}(t-s))\mathrm ds + \int_0^t \Im f_k(s)\cos(\sqrt{\lambda_k}(t-s))\mathrm ds, \\
	\Im u_k(t) &= \int_0^t \Im f_k(s)\sin(\sqrt{\lambda_k}(t-s))\mathrm ds - \int_0^t \Re f_k(s)\cos(\sqrt{\lambda_k}(t-s))\mathrm ds,
\end{align*}
with $\Re$ and $\Im$ denoting the real and the imaginary part, respectively. Hence, repeating the argument in Sections~\ref{SS:energy-balance-law} and~\ref{SS:damping} eventually leads to the point-wise energy balance
\[
|u_k(t)|^2 = \left(\Re \fcos(t) + \Im \fsin(t)\right)^2 + \left(\Re \fsin(t) - \Im \fcos(t)\right)^2, \quad \text{a.e. in $(0,T)$}
\]
and to a counterpart of Proposition~\ref{P:data-norm-energy-balance}. Note, in particular, that the right-hand side in this identity equals the one of \eqref{eq:energy-balance} for real-valued source terms. This structural analogy gives hope that an effective technique for the analysis of the acoustic wave equation can be extended also to other wave-like equations.

\bibliographystyle{siam}
\bibliography{bibliography}

@book {Dautray.Lions:92,
    AUTHOR = {Dautray, Robert and Lions, Jacques-Louis},
     TITLE = {Mathematical analysis and numerical methods for science and
              technology. {V}ol. 5},
      NOTE = {Evolution problems. I},
 PUBLISHER = {Springer-Verlag, Berlin},
      YEAR = {1992},
     PAGES = {xiv+709},
}

@book {Dym.McKean:72,
    AUTHOR = {Dym, H. and McKean, H. P.},
     TITLE = {Fourier series and integrals},
    SERIES = {Probability and Mathematical Statistics},
    VOLUME = {No. 14},
 PUBLISHER = {Academic Press, New York-London},
      YEAR = {1972},
     PAGES = {x+295},
}

@book {Ern.Guermond:21-II,
	AUTHOR = {Ern, A. and Guermond, J.-L.},
	TITLE = {Finite elements {II} - {G}alerkin approximation, elliptic and
	mixed {PDE}s},
	SERIES = {Texts in Applied Mathematics},
	VOLUME = {73},
	PUBLISHER = {Springer, Cham},
	YEAR = {[2021] \copyright 2021},
	PAGES = {ix+492}
}

@book {Ern.Guermond:21-III,
	AUTHOR = {Ern, A. and Guermond, J.-L.},
	TITLE = {Finite elements {III} - First-order and time-dependent {PDE}s},
	SERIES = {Texts in Applied Mathematics},
	VOLUME = {74},
	PUBLISHER = {Springer, Cham},
	YEAR = {[2021] \copyright 2021},
	PAGES = {viii+417}
}

@book {Evans:10,
    AUTHOR = {Evans, Lawrence C.},
     TITLE = {Partial differential equations},
    SERIES = {Graduate Studies in Mathematics},
    VOLUME = {19},
   EDITION = {Second},
 PUBLISHER = {American Mathematical Society, Providence, RI},
      YEAR = {2010},
     PAGES = {xxii+749},
}

@article {Feischl:22,
	AUTHOR = {Feischl, M.},
	TITLE = {Inf-sup stability implies quasi-orthogonality},
	JOURNAL = {Math. Comp.},
	FJOURNAL = {Mathematics of Computation},
	VOLUME = {91},
	YEAR = {2022},
	NUMBER = {337},
	PAGES = {2059--2094}
}

@article {Fuhrer.Gonzalez.Karkulik:25,
    AUTHOR = {Führer, Thomas and González, Roberto and Karkulik, Michael},
    TITLE = {Well-posedness of first-order acoustic wave equations and space-time finite element approximation},
    JOURNAL = {IMA Journal of Numerical Analysis},
    PAGES = {drae104},
    YEAR = {2025},
}

@misc {Kreuzer.Zanotti:24a,
	AUTHOR = {Kreuzer, C. and Zanotti, P.},
	TITLE = {Inf-sup theory for the quasi-static {B}iot's equations in poroelasticity},
	HOWPUBLISHED = {arXiv preprint arXiv:2407.02932},
}

@book {Ladyzhenskaya:85,
    AUTHOR = {Ladyzhenskaya, O. A.},
     TITLE = {The boundary value problems of mathematical physics},
    SERIES = {Applied Mathematical Sciences},
    VOLUME = {49},
 PUBLISHER = {Springer-Verlag, New York},
      YEAR = {1985},
     PAGES = {xxx+322},
}

@book {Zank:20,
  AUTHOR={Zank, Marco},
  TITLE={Inf-sup stable space-time methods for time-dependent partial differential equations},
  YEAR={2020},
  PUBLISHER={Verlag der Technischen Universit{\"a}t Graz}
}

@article {Steinbach.Zank:22,
    AUTHOR = {Steinbach, Olaf and Zank, Marco},
     TITLE = {A generalized inf-sup stable variational formulation for the
              wave equation},
   JOURNAL = {J. Math. Anal. Appl.},
  FJOURNAL = {Journal of Mathematical Analysis and Applications},
    VOLUME = {505},
      YEAR = {2022},
    NUMBER = {1},
     PAGES = {Paper No. 125457, 24},
}

@article {Babuska:70,
    AUTHOR = {Babu\v{s}ka, Ivo},
     TITLE = {Error-bounds for finite element method},
   JOURNAL = {Numer. Math.},
  FJOURNAL = {Numerische Mathematik},
    VOLUME = {16},
      YEAR = {1970/71},
     PAGES = {322--333},
}

@article {Brezzi:74,
    AUTHOR = {Brezzi, F.},
     TITLE = {On the existence, uniqueness and approximation of saddle-point
              problems arising from {L}agrangian multipliers},
   JOURNAL = {Rev. Fran\c caise Automat. Informat. Recherche
              Op\'erationnelle S\'er. Rouge},
  FJOURNAL = {Revue Fran\c caise d'Automatique, Informatique et Recherche
              Op\'erationnelle S\'erie Rouge},
    VOLUME = {8},
      YEAR = {1974},
     PAGES = {129--151},
}

@article {Bignardi.Moiola:25,
    AUTHOR = {Bignardi, Paolo and Moiola, Andrea},
     TITLE = {A space-time continuous and coercive formulation for the wave
              equation},
   JOURNAL = {Numer. Math.},
  FJOURNAL = {Numerische Mathematik},
    VOLUME = {157},
      YEAR = {2025},
    NUMBER = {4},
     PAGES = {1211--1258},
}

@article {Henning.Palitta.Simoncini.Urban:22,
    AUTHOR = {Henning, Julian and Palitta, Davide and Simoncini, Valeria and
              Urban, Karsten},
     TITLE = {An ultraweak space-time variational formulation for the wave
              equation: analysis and efficient numerical solution},
   JOURNAL = {ESAIM Math. Model. Numer. Anal.},
  FJOURNAL = {ESAIM. Mathematical Modelling and Numerical Analysis},
    VOLUME = {56},
      YEAR = {2022},
    NUMBER = {4},
     PAGES = {1173--1198},
}

@book {Lions.Magenes:72,
    AUTHOR = {Lions, J.-L. and Magenes, E.},
     TITLE = {Non-homogeneous boundary value problems and applications.
              {V}ol. {I}},
    SERIES = {Die Grundlehren der mathematischen Wissenschaften},
    VOLUME = {Band 181},
      NOTE = {Translated from the French by P. Kenneth},
 PUBLISHER = {Springer-Verlag, New York-Heidelberg},
      YEAR = {1972},
     PAGES = {xvi+357},
}

@article{Ferrari.Perugia:25,
    AUTHOR = {Ferrari, Matteo and Perugia, Ilaria},
	TITLE = {Space-time discretization of the wave equation in a second-order-in-time formulation: a conforming, unconditionally stable method},
	JOURNAL = {arXiv preprint arXiv:2503.11166},
	YEAR = {2025}
}

@article{Ferrari.Perugia.Zampa:25,
    AUTHOR = {Ferrari, Matteo and Perugia, Ilaria and Zampa, Enrico},
	TITLE = {Inf-sup stable space-time discretization of the wave equation based on a first-order-in-time variational formulation},
	JOURNAL = {arXiv preprint arXiv:2506.05886},
	YEAR = {2025},
}

@article {Hiptmair:06,
    AUTHOR = {Hiptmair, R.},
     TITLE = {Operator preconditioning},
   JOURNAL = {Comput. Math. Appl.},
  FJOURNAL = {Computers \& Mathematics with Applications. An International
              Journal},
    VOLUME = {52},
      YEAR = {2006},
    NUMBER = {5},
     PAGES = {699--706},
}

@article {Loscher.Steinbach,
    AUTHOR = {L\"oscher, Richard and Steinbach, Olaf},
     TITLE = {Space-time finite element methods for distributed optimal
              control of the wave equation},
   JOURNAL = {SIAM J. Numer. Anal.},
  FJOURNAL = {SIAM Journal on Numerical Analysis},
    VOLUME = {62},
      YEAR = {2024},
    NUMBER = {1},
     PAGES = {452--475},
}

@article {Mardal.Winther:11,
    AUTHOR = {Mardal, Kent-Andre and Winther, Ragnar},
     TITLE = {Preconditioning discretizations of systems of partial
              differential equations},
   JOURNAL = {Numer. Linear Algebra Appl.},
  FJOURNAL = {Numerical Linear Algebra with Applications},
    VOLUME = {18},
      YEAR = {2011},
    NUMBER = {1},
     PAGES = {1--40},
}

@book {Verfurth:13,
    AUTHOR = {Verf\"urth, R\"udiger},
     TITLE = {A posteriori error estimation techniques for finite element
              methods},
    SERIES = {Numerical Mathematics and Scientific Computation},
 PUBLISHER = {Oxford University Press, Oxford},
      YEAR = {2013},
     PAGES = {xx+393},
}

@article {Hoonhout.Loscher.UrzuaTorres:25,
  AUTHOR={Hoonhout, Daniel and L{\"o}scher, Richard and Urz{\'u}a-Torres, Carolina},
  TITLE={Paving the way to a $\operatorname{T}$-coercive method for the wave equation},
  JOURNAL={arXiv preprint arXiv:2509.02288},
  YEAR={2025}
}

@article {Lischke.Pang.Gulian:20,
    AUTHOR = {Lischke, Anna and Pang, Guofei and Gulian, Mamikon and et al.},
     TITLE = {What is the fractional {L}aplacian? {A} comparative review
              with new results},
   JOURNAL = {J. Comput. Phys.},
  FJOURNAL = {Journal of Computational Physics},
    VOLUME = {404},
      YEAR = {2020},
     PAGES = {109009, 62},
}

@article {Dong.Gergoulis.Mascotto.Wang:25,
  AUTHOR={Dong, Zhaonan and Georgoulis, Emmanuil H and Mascotto, Lorenzo and Wang, Zuodong},
  TITLE={A posteriori error analysis and adaptivity of a space-time finite element method for the wave equation in second order formulation},
  JOURNAL={arXiv preprint arXiv:2509.08537},
  YEAR={2025}
}

@misc{Dong.Mascotto.Wang:25,
      AUTHOR={Zhaonan Dong and Lorenzo Mascotto and Zuodong Wang},
      TITLE={A priori and a posteriori error estimates of a {$\mathcal {C}^0$}-in-time method for the wave equation in second order formulation}, 
      JOURNAL={arXiv preprint 2411.03264},
}
\end{document}